\newtheorem{theorem}{Theorem}[section]
\newtheorem{lemma}[theorem]{Lemma}
\newtheorem{definition}[theorem]{Definition}
\newtheorem{example}{Example}
\newtheorem{remark}[theorem]{Remark}
\begin{document}

\begin{frontmatter}



\title{Maximum principle for optimal control of interacting particle system: stochastic flow  model}

\author[label1]{Andrey A. Dorogovtsev}
\affiliation[label1]{organization={Institute of Mathematics, NAS of Ukraine,  Kyiv 01024, Ukraine}}

\author[label2]{Yuecai Han}
\affiliation[label2]{organization={School of Mathematics, Jilin University, Changchun 130012, China}}

\author[label2,label1]{Kateryna Hlyniana}

\author{Yuhang Li\corref{cor1}\fnref{label2}}
\ead{yuhangl22@mails.jlu.edu.cn}
\cortext[cor1]{Corresponding: }

\begin{abstract}
In this paper, we study a stochastic optimal control problem for an interacting particle system. By introducing a generalized backward stochastic differential equation with interaction, we establish a stochastic maximum principle for the optimal control. We prove the existence and uniqueness of the solution to this class of equations and derive necessary conditions that an optimal control must satisfy. As an application, we examine the linear-quadratic case to illustrate the main results.
\end{abstract}



\begin{keyword}Maximum principle, backward stochastic differential equation with interaction, differentiation of functions of measure, linear quadratic control.


 \MSC[2020] 60H10, 93E20, 93E03.

\end{keyword}

\end{frontmatter}



\section{Introduction}
In this paper, we investigate a stochastic optimal control problem for a family of particles whose dynamics are governed by a stochastic differential equation with interaction. Such equations were introduced by A.A. Dorogovtsev \cite{Dor2003, dorogovtsev2023measure}. We consider the following form of the stochastic differential equation (SDE) with interactions:
\begin{equation}
\label{SDE_WI}
    \left\{\begin{array}{l}
 d X(u, t)=b\left(t,X(u, t), \mu_{t}\right) d t+\int_{\mathbb{R}^d}\sigma(t, X(u,t), \mu_t,r)W(dt,dr), \\
 X(u, 0)=u,\quad u \in \mathbb{R}^{d}, \\
 \mu_{t}=\mu_{0} \circ X(\cdot, t)^{-1}.
\end{array}\right.
\end{equation}
Here,  $W(\cdot,\cdot)$ is an $m$-dimensional standard Wiener sheet  and the diffusion coefficient $\sigma$  takes values in $\mathbb{R}^{d\times m}$.
Equation  (\ref{SDE_WI}) describes the evolution of a family of particles $\{X(u, \cdot)\}_{ u \in \mathbb{R}^{d}}$ each starting from a point of the space $u\in\mathbb{R}^{d}$. The initial mass distribution of the family of particles is given by a probability measure $\mu_{0}$ which is supposed to be a probability measure on the Borel $\sigma-$algebra of $\mathbb{R}^{d}$. The mass distribution of the family of particles evolves over time, and at moment $t$, it can be written as the push forward of $\mu_{0}$ by the mapping $X(\cdot, t)$. Both the drift and diffusion coefficients in Equation (\ref{SDE_WI}) depend on the mass distribution $\mu_{t}$. This gives the possibility to describe the motion of a particle that depends on the positions of all other particles in the system. Note that the measure-valued process $\mu_t$ is random,  and we may also write $\mu_t(\Delta)=\mu_t(\omega,\Delta)$.

Analogously to the classical stochastic optimal control framework, we consider the following control problem for an SDE with interaction:
\begin{equation}
\left\{\begin{array}{l}
\label{Control_problem_with_int}
d X(u, t)=b\left(t, X(u, t), \mu_{t}, \alpha_{t}\right) d t+ \int_{\mathbb R^d}\sigma\left(t, X(u, t), \mu_{t}, \alpha_{t}, r\right) W(dt,dr), \\
X(u, 0)=u, \\
\mu_{t}=\mu_{0} \circ X(\cdot, t)^{-1} .
\end{array}\right.
\end{equation}
The evolution of the system (\ref{Control_problem_with_int}) depends on a stochastic control process $\mathbf{\alpha}=\left(\alpha_{t},\quad  0 \leq t \leq T\right)$, which is assumed to be progressively measurable with respect to filtration  $\{\mathcal F_{t}\}_{t\geq 0}$ generated by the Wiener sheet.
At each time $t,$  the control value $\alpha_{t}$ is chosen to influence the system's dynamics.
The goal of the stochastic optimal control problem is to find a control process $\left(\alpha_t\right)_{0\leq t\leq T}$ that minimizes the cost functional $J(\alpha)$ of the form
\begin{equation}
\label{cost_function}
J(\alpha)=\mathbb{E} \left [ \int_{0}^{T} f\left(t, \mu_{t}, \alpha_{t}\right) d t+g\left( \mu_{T}\right)\right]  .
\end{equation}

The investigation of this optimal control problem is motivated by the Monge-Kantorovich transport problem \cite{rachev1985monge,bogachev2012monge,leonard2012schrodinger}. The Monge-Kantorovich transport problem deals with the optimal mapping of one probability measure on $\mathbb R^d$ to another while minimizing a transportation cost functional. In our setting, the system evolves from an initial mass distribution $\mu_0$. The cost functional (\ref{cost_function}) of the control problem consists of two components:  a running cost function, given by the integral of $f$ over time, and a terminal cost $g$, which depends on the mass distribution of the system at the final time.
In analogy with the Monge–Kantorovich problem, the function $g$ may be interpreted as a measure of distance between a target probability measure $\nu$  and the terminal distribution $\mu_T$ of the system. For instance, we may take $g(\mu_T) = \rho(\mu_T, \nu),$ 
where $\rho$ denotes a distance between measures. One possible choice is:
\begin{align}\label{dis}
\rho^2(\mu,\nu)=\int\int_{\mathbb{R}^2}\Gamma(u-v)[\mu(du)-\nu(du)]\cdot[\nu(dv)-\mu(dv)],
\end{align}
where $\Gamma$ is a continuous, non-negative definite function. Unlike the classical Monge-Kantorovich transport problem, however, the terminal mass distribution $\mu_T=\mu_T(\omega)$ is random in our setting.  Consequently, the mimimization of $\rho(\mu_T,\nu)$
 must be understood in the sense of expected value.

In the case where the initial measure is discrete, say
$$\mu_{0}=\sum_{i=1}^n p_i\delta_{u_i}, \ u_i\in \mathbb{R}, 1\le i \le n,$$
the mass distribution of the system at time $t$ is equal to 
$$ \mu_{t}=\sum_{i=1}^np_i\delta_{X(u_i, t)}.$$
From this one can see that the dependence of coefficients on the measure $\mu_t$ in equation (\ref{Control_problem_with_int}) and in the cost function (\ref{cost_function}) is realized through the positions of the of the `heavy points' $X(u_1,t),\ldots, X(u_n,t).$ The control problem for the stochastic differential equation with interaction, in this case, can be solved by considering the system of equations for heavy points. To write it, let us denote by 
$$
b_i(t, u_1,\ldots, u_n,\alpha):= b\left(t, u_i, \sum_{j=1}^n p_j \delta_{u_j},\alpha\right),
$$
$$
\sigma_i(t, u_1,\ldots, u_n,\alpha,r):=\sigma \left(t, u_i, \sum_{j=1}^n p_j \delta_{u_j},\alpha,r\right),
$$
Then, the corresponding control problem for heavy points takes the form:
\begin{equation}
\label{n_pont_control}
    \left\{\begin{array}{l}
 d X(u_i, t)=b_i\left(t,X(u_1, t),\ldots, X(u_n,t),\alpha_t\right) d t\\\qquad\qquad\quad+\int_{\mathbb{R}^d}\sigma_i(t, X(u_1,t), \ldots, X(u_n,t),\alpha_t,r)W(dt,dr), \quad i=1,\ldots, n, \\
 X(u_i, 0)=u_i,\quad i=1,\ldots,n.\,
\end{array}\right.
\end{equation}
The associated cost functional becomes:
\begin{equation}
\label{n_point_cost_function}
J(\alpha)=\mathbb{E}\left [ \int_{0}^{T} f\left( \sum_{i=1}^np_i\delta_{X(u_i, t)}, \alpha_{t}\right) d t+g\left( \sum_{i=1}^np_i\delta_{X(u_i, T)}\right)\right] .
\end{equation}
Once an optimal control process  $(\alpha_t)_{0\leq t\leq T}$ is found, the SDE with interaction  (\ref{Control_problem_with_int}) can be solved for any remaining points $R^d\setminus\{u_1, \ldots, u_n\}$ with zero mass, i.e. for  $X(u,\cdot).$ 

It is important to emphasize that the stochastic differential equations (SDEs) with interaction and backward stochastic differential equations (BSDEs) with interaction considered in this work differ significantly from the classical McKean–Vlasov equations, which have been extensively studied by many researchers\cite{Kotelenez95, Buckdahn09, AGRAM2022, kolokoltsov_2010}. 
The key distinction lies in the nature of the measure dependence: in McKean–Vlasov equations, the coefficients depend on the distribution law of the position of a representative particle, whereas in SDEs with interaction, they depend on the distribution of mass (wich is given by a random probability measure) of the system.
More precisely, these two frameworks describe particle systems from fundamentally different perspectives. In the  McKean-Vlasov equation, each particle in the system is driven by independent noises, and their initial positions are i.i.d. variables. The empirical distribution of the position for these particles converges to the probability distribution law $\mathcal{L}_t$. By contrast, in SDEs with interaction, the initial positions of particles are determined and the distribution of their initial position is known. Moreover, the particles are not driven by independent noise sources but by a common Brownian sheet. This results in spatially dependent stochastic perturbations: particles located at different positions experience different random influences.  To illustrate the distinction, consider a system of  $N$ particles.The McKean–Vlasov-type system is described by the following SDEs:
\begin{align}\label{mv}
\left\{\begin{array}{ll}
dX^i(t)=b\left(t,X^i(t),\frac{1}{N}\sum_{j=1}^N\delta_{X^j(t)}\right)dt+\sigma\left(t,X^i(t),\frac{1}{N}\sum_{j=1}^N\delta_{X^j(t)}\right)dW^i_t,
\\X^i(0)=x^i_0,
\end{array}\right.
\end{align}
for $1\le i\le N$, where $x^i_0$ are i.i.d variables with the probability distribution 
$\mathcal{L}_0$ and $W_t^i$ are independent Brownian motions.
 In contrast, for interaction type, taking the initial measure $\mu_0=\frac{1}{N}\sum_{i=1}^N\delta_{u^i}$ with determined points $u^i,$  the SDE with interaction
can be written as a system of stochastic differential equations for heavy particles $X(u^i,t):$
\begin{align}\label{int-discrete}
\left\{\begin{array}{ll}
dX(u^i,t)=b\left(t,X(u^i,t),\frac{1}{N}\sum_{j=1}^N\delta_{X(u^j,t)}\right)dt\\
\qquad\qquad\quad+\int_{\mathbb{R}^d}\sigma\left(t,X(u^i,t),\frac{1}{N}\sum_{j=1}^N\delta_{X(u^j,t)},q\right)W(dq,dt),
\\X(u^i,0)=u^i, \ i= 1,\ldots, N.
\end{array}\right.
\end{align}
Letting $N$ tends to $+\infty$, the limiting dynamics in $L^2_{\mathcal{F}}([0,T],\mathbb{R}^d)$ for the McKean–Vlasov system \eqref{mv} is:
\begin{align*}
\left\{\begin{array}{ll}
dX(t)=b\left(t,X(t),\mathcal{L}_t\right)dt+\sigma\left(t,X(t),\mathcal{L}_t\right)dW_t,
\\X(0)=x_0,\quad x_0\sim \mathcal{L}_0,\\
\mathcal{L}_t=\mathbb P \circ X(t)^{-1},
\end{array}\right.
\end{align*}
where $x_0$ is independent with $W_t$ and $\mathbb P \circ X(t)^{-1}$ denotes probability distribution of $X(t). $ In contrast, the limiting form of the interacting system \eqref{int-discrete} is:
\begin{equation*} 
\left\{\begin{array}{l}
d X(u, t)=b\left(t, X(u, t), \mu_t\right) dt\\
\qquad\qquad\quad+\int_{\mathbb{R}^d}\sigma\left(t, X(u, t), \mu_t,q\right)W(dq,dt), \\
X(u, 0)=u, \\
\mu_{t}=\mu_{0} \circ X(\cdot, t)^{-1} ,
\end{array}\right.
\end{equation*}
where the solution $X(u,t), \ u\in \mathbb R^d$ defines a stochastic flow $\{X(\cdot,t)\}_{t\geq0}$, and the measure $\mu_t$ evolves as the pushforward of the initial measure $\mu_0$ under this flow.
The measure $\mu_t$ can be treated as a mass distribution of the
interacting particle system.  This is the main difference with the McKean-Vlasov type equations.

For the stochastic maximum principle for optimal control problem, the adjoint method  is typically used to derive necessary conditions for optimality \cite{Kushner72,Bismut78,Bensoussan_1982,yong1999stochastic}, some related works refer to \cite{peng1990general,zhou1998stochastic,peng1999fully,han2010maximum,yong2013linear,han2013maximum}. More precisely, in the classical stochastic optimal control problem, where the system dynamics do not depend on the distribution of mass, the adjoint process is given by a solution $(Y, Z)$ to a backward stochastic differential equation (BSDE) \cite{pardoux1990adapted,peng1993backward,el1997backward},  which has the following form:
\begin{align*}
\left\{\begin{array}{ll}
-dY_t=[b_x(t,X_t,\alpha_t)Y_t+\sigma_x(t,X_t,\alpha_t)Z_t+f_x(t,X_t,\alpha_t)]dt-Z_tdW_t,
\\\quad Y_T=g_x(X_T).
\end{array}\right.
\end{align*}

In our case, the state equation (\ref{Control_problem_with_int}) depends on the evolving mass distribution $\mu_t$, as well as functions $f$ and $g$ in the cost functional (\ref{cost_function}). As a result, it becomes necessary to use derivatives with respect to probability measures to define the adjoint process for the optimal control problem with interaction.
In this paper, we construct the corresponding adjoint process and derive the necessary condition that an optimal control must satisfy. To this end, we introduce a generalized BSDE with interaction and prove the existence and uniqueness of its solution. We note that a related form of backward stochastic differential equation has been studied in \cite{DorJas2022backward}, but to apply it to a control problem with interaction, we must work with a more general formulation.
The main difficulty here lies in the fact that, in BSDEs with interaction, the coefficients depend on the random mass distribution generated by the flow. The well-posedness of such an equation is of independent mathematical interest.
By establishing the existence and uniqueness of solutions to a generalized BSDE, we derive an analogue of the Pontryagin stochastic maximum principle for optimal control for the equation with interaction.

The rest of this paper has the following structure. In Section \ref{sec_BS}, we provide a brief overview of the Brownian sheet. Although this material is well known, we include it for the reader’s convenience and to fix notation. In Section \ref{sec_Prem}, we recall the notion of Lions differentiability for functions defined on the space of probability measures.  We will use this type of differentiability to introduce an adjoint equation to a control problem with interaction. We define the generalized backward stochastic differential equation with interaction in Section \ref{sec_BSDE}. Here, we prove the existence and uniqueness of the solution to this equation. Section \ref{sec_CS} is devoted to the optimal control problem with interaction. The corresponding adjoint equation is given in the form of a backward stochastic differential equation with interaction from Section \ref{sec_BSDE}. We end this section with the maximum principle for optimal control. Finally, in Section \ref{sec_Example}, we present several examples to illustrate the main results.

\section{Brownian sheet}
\label{sec_BS}
In the differential equation with interaction, it is essential to use integrals with respect to the Brownian sheet in the stochastic term. This approach allows us to model a system of interacting particles with arbitrary correlation between any two of them.  For the reader’s convenience, we briefly recall the definition of the Brownian sheet and the associated stochastic calculus.
The notion of a Brownian sheet appears as a generalization of the Brownian motion to a multiparameter case  (see, for example, E. Wong and M. Zakai \cite{WongZakai1974}). 
 Let us  consider a Wiener process  $B$ on $\mathbb{R}^2$ with zero mean and covariance function 
 $$
 \mathbb{E} B(s_1,s_2) B(t_1,t_2) = \min(s_1, t_1) \min (s_2,t_2). 
 $$
Define a random Gaussian measure $W$ on the Borel $\sigma-$algebra $\mathcal B(\mathbb R^2)$ by
 $$
 W([a, b]\times [c,d]) = B(b,d) - B(a,d)-B(b,c)+B(a,c),
 $$
 for $a<b,$ $c<d.$
 Then $W([a,b]\times [c,d])\sim N(0, (b-a)(d-c))$ and for disjoint  rectangles
 $[a, b]\times [c,d],$ $[a', b']\times [c',d']$ random values
 $W([a,b]\times [c,d])$ and $W([a',b']\times [c',d'])$ are independent. This construction can be generalized for a multidimensional case.
 
Integration with respect to the Brownian sheet was defined and investigated by E. Wong and M. Zakai \cite{WongZakai1974} who also derived an It{\^o} formula for multiparameter processes defined via such integrals \cite{WONG1978339}.
 It is worth noting that the Brownian sheet arises as a limit process in the context of interacting particle systems, as shown in the works of K. Kuroda and H. Manaka \cite{KurodaManaka1986}, and K. Kuroda and H. Tanemura \cite{KurodaTanemura1988}. The idea of using a Wiener sheet to model stochastic systems driven by spatially correlated noise also appears in the study of random vortex models, such as in the work by P. Kotelenez \cite{Kotelenez92}.

A construction of the Brownian sheet using a sequence of independent Brownian motions can be found in the book by A. Dorogovtsev \cite{dorogovtsev2023measure}. 
Since we will use this representation in the present paper, we briefly describe it here for the reader's convenience. 
Let  $\lambda$ denote the Lebesgue measure in $\mathbb{R}^d$ and let $\left\{e_{k} ; k \geq 1\right\}$ be an orthonormal basis in $L_{2}(\mathbb{R}^d, \lambda)$. For a Borel set $\Delta \subset \mathbb{R}^d \times[0 ;+\infty)$ with finite Lebesgue measure, and for any $t\geq 0$ define time-section
$$
\Delta_{t}=\left\{r \in \mathbb{R}^{d}:(r, t) \in \Delta\right\}.
$$
By the Fubini theorem, for almost all $t,$ the indicator function
$
\mathbb{I}_{\Delta_{t}}$ belongs to $ L_{2}(\mathbb{R}^d, \lambda) .
$
Therefore, it admits an expansion in terms of the orthonormal basis:
$$
\mathbb{I}_{\Delta_{t}}=\sum_{k=1}^{\infty} f_{k}(t) e_{k}.
$$
Let $\{w_k\}_{k\geq 1}$ be a sequence of independent standard Wiener processes.  Now we define the Gaussian random variable associated with the set $\Delta$ as
$$
W(\Delta)=\sum_{k=1}^{\infty} \int_{0}^{+\infty} f_{k}(t) d w_{k}(t).
$$
It is easy to see that the following relations hold:\\
(1) $\mathbb{E} W(\Delta)=0,\  \mathbb{E} W(\Delta)^{2}=\lambda_{2}(\Delta)$.\\
(2) If $\Delta_{1}$ and $\Delta_{2}$ have finite Lebesgue measure, then
$$
\mathbb{E} W\left(\Delta_{1}\right) W\left(\Delta_{2}\right)=\lambda_{2}\left(\Delta_{1} \cap \Delta_{2}\right) .
$$
(3) If $\Delta_{1} \cap \Delta_{2}=\emptyset$, then
$$
W\left(\Delta_{1}\right)+W\left(\Delta_{2}\right)=W\left(\Delta_{1} \cup \Delta_{2}\right) .
$$

It is natural to associate with Brownian sheet $W$ the flow of $\sigma$-fields
$$
\mathcal{F}_{t}=\sigma\{W(\Delta): \Delta \subset \mathbb{R}^d \times[0 ; t]\}.
$$
It can be proved that for all $t \geq 0:$ 
$$
\mathcal{F}_{t}=\sigma\left\{w_{k}(s): k \geq 1, s \leq t\right\} .
$$
Now, consider a random function $f$ from $L_{2}\left(\mathbb{R}^d \times[0 ;+\infty), \lambda_{2}\right)$ such that,  for every $t \geq 0$  the restriction of $f$ to $\mathbb{R}^d \times[0 ; t]$ is $\mathcal{F}_{t}$-measurable.  To define the stochastic integral
$$
\int_{0}^{+\infty} \int_{\mathbb{R}^d} f(s, r) W(d s, d r) 
$$
we use a sequence of approximating step functions: 
$$
f_{n}(s, r)=\sum_{k=0}^{n} \varphi_{k}^{n}(r) \mathbb{I}_{\left[t_{k}^{n} ; t_{k+1}^{n}\right]}(s),
$$
with the following properties:\\
(1) $\mathbb{E} \int_{0}^{+\infty} \int_{\mathbb{R}^d}\left(f_{n}-f\right)^{2} d u d s \rightarrow 0, n \rightarrow \infty$,\\
(2) for every $k,$ $ \varphi_{k}^{n}$ is $\mathcal{F}_{t_{k}^{n}}$-measurable,\\
(3) for every $k$,
$$
\varphi_{k}^{n}(r)=\sum_{j=0}^{n-1} \alpha_{k j}^{n} \mathbb{I}_{\Delta_{j}^{n}} (r),
$$
where $\Delta_{j}^{n}, j=0, \ldots, n$ are disjoint subsets of $\mathbb{R}^d$ with finite measure.
For each $n \geq 1$, define the stochastic integral as
$$
\int_{0}^{+\infty} \int_{\mathbb{R}^d} f_{n}(s, r) W(d s, d r)=\sum_{k=0}^{n} \sum_{j=0}^{n} a_{k j}^{n} W\left(\left[t_{k}^{n} ; t_{k+1}^{n}\right) \times \Delta_{j}^{n}\right).
$$
Then, the stochastic integral of $f$ with respect to $W$ is defined as the limit in $L_2-$sense:
$$
\int_{0}^{+\infty} \int_{\mathbb{R}^d} f(s, r) W(d s, d r):=\lim _{n \rightarrow \infty} \int_{0}^{+\infty} \int_{\mathbb{R}^d} f_{n}(s, r) W(d s, d r).
$$
It can be checked that the value of the limit does not depend on the choice of the approximating sequence $\left\{f_{n} ; n \geq 1\right\}$ and that the obtained integral has properties:
$$
\begin{aligned}
& \mathbb{E} \int_{0}^{+\infty} \int_{\mathbb{R}^d} f(s, r) W(d s, d r)=0 \\
& \mathbb{E}\left(\int_{0}^{+\infty} \int_{\mathbb{R}^d} f(s, r) W(d s, d r)\right)^{2}=\mathbb{E} \int_{0}^{+\infty} \int_{\mathbb{R}^d} f(s, r)^{2} d s d u.
\end{aligned}
$$
Using the definition of $W,$ this integral can also be written as
\begin{align}\label{trans}
\int_{0}^{+\infty} \int_{\mathbb{R}^d} f(s, r) W(d s, d r) = \sum_{k=1}^{\infty}\int_0^{+\infty}g_k(s) dw_k(s),
\end{align}
where 
$g_k(t) = \int_{\mathbb R^d}f(t,r) e_k(r)dr.$
Using this representation, we obtain the following result for $\mathcal F_t-$adapted martingales.
\begin{lemma}\label{martin}
Let $M_t$ be a square integrable $\mathcal{F}_t$-adapted martingale. Then there exists $z(\cdot,\cdot)\in L^2([0,T]\times \mathbb{R}^d)$, such that 
\begin{align*}
M_t=\mathbb{E}M_0+\int_0^t\int_{\mathbb{R}^d} z(s,r)W(ds,dr).
\end{align*}
\end{lemma}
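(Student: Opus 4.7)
The plan is to leverage the explicit construction of the Brownian sheet from a countable family of independent one-dimensional Wiener processes $\{w_k\}_{k\geq 1}$, together with the identification $\mathcal{F}_t = \sigma\{w_k(s): k\geq 1, s\leq t\}$ already recorded in the excerpt. This reduces the statement to the classical martingale representation theorem for a sequence of independent Brownian motions, followed by a change of coordinates via the orthonormal basis $\{e_k\}$ of $L^2(\mathbb{R}^d,\lambda)$ that was used to define $W$.

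First, I would apply the standard martingale representation theorem to the square integrable $\mathcal{F}_t$-martingale $M_t$. Since $\mathcal{F}_t$ is generated by the countable family $\{w_k\}$ of independent standard Wiener processes, there exist progressively measurable processes $h_k(\cdot)$ such that
\begin{equation*}
M_t = \mathbb{E}M_0 + \sum_{k=1}^{\infty}\int_0^t h_k(s)\, dw_k(s),
\qquad \sum_{k=1}^{\infty}\mathbb{E}\int_0^T h_k(s)^2\, ds < \infty.
\end{equation*}
Next, I would define
\begin{equation*}
z(s,r) := \sum_{k=1}^{\infty} h_k(s)\, e_k(r),
\end{equation*}
interpreting the sum in $L^2(\mathbb{R}^d,\lambda)$ for almost every $(s,\omega)$. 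By Parseval's identity and Fubini,
\begin{equation*}
\mathbb{E}\int_0^T\int_{\mathbb{R}^d} z(s,r)^2\, dr\, ds
= \mathbb{E}\int_0^T \sum_{k=1}^\infty h_k(s)^2\, ds < \infty,
\end{equation*}
so $z\in L^2([0,T]\times\mathbb{R}^d)$ with the required adaptedness inherited from the $h_k$'s.

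Finally, I would invoke the transformation formula (\ref{trans}) in reverse: by construction $g_k(s) = \int_{\mathbb{R}^d} z(s,r) e_k(r)\, dr = h_k(s)$, and so
\begin{equation*}
\int_0^t\int_{\mathbb{R}^d} z(s,r)\, W(ds,dr)
= \sum_{k=1}^\infty \int_0^t h_k(s)\, dw_k(s) = M_t - \mathbb{E}M_0,
\end{equation*}
which is exactly the desired representation.

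The main delicate point is the interchange of the infinite sum with the stochastic integral against $W$: strictly speaking, (\ref{trans}) is stated for a single $f\in L^2([0,T]\times\mathbb{R}^d)$, so to pass from the series representation in the $w_k$'s back to a single sheet integral, I would first truncate to the partial sums $z^{(N)}(s,r) = \sum_{k=1}^N h_k(s) e_k(r)$, verify the identity for each $N$ using (\ref{trans}) (where the sum is finite and everything is rigorous), and then pass to the limit in $L^2(\Omega)$ using the isometry properties of the sheet integral together with the $L^2$-convergence of $z^{(N)}\to z$. This truncation-and-limit step is the main technical obstacle, but it is routine given the isometry recorded earlier.
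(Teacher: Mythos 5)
Your proof is correct, but it takes a genuinely different route from the paper. The paper's argument works entirely at the level of the sheet: it invokes the It\^o--Wiener chaos expansion of $M_T$ into multiple integrals with respect to $W$, peels off the last integration variable to define $z(t,r)$ as a series of $(n-1)$-fold integrals of the chaos kernels, and then obtains $M_t$ by conditioning on $\mathcal{F}_t$. You instead pass through the coordinate representation $W \leftrightarrow \{w_k\}$: you apply the classical martingale representation theorem for the filtration generated by the countable family of independent Brownian motions, obtaining integrands $h_k$, and then reassemble them into a single kernel $z(s,r)=\sum_k h_k(s)e_k(r)$ via Parseval and the transformation formula \eqref{trans}. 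What your approach buys is that the only nontrivial input is a standard, well-documented theorem (the MRT for a countably generated Brownian filtration), and you explicitly flag and resolve the one delicate point, namely the interchange of the infinite sum with the sheet integral, by truncation and $L^2$-isometry; this is arguably tighter than the paper's proof, which defines $z$ as an infinite series of multiple Wiener integrals without commenting on its convergence. What the paper's approach buys is that it stays intrinsic to the sheet and does not require citing the infinite-dimensional version of the representation theorem, only the chaos expansion from \cite{dorogovtsev2019stochastic}. Both proofs ultimately rest on the same identification \eqref{trans} between sheet integrals and series of one-dimensional It\^o integrals, so the difference is one of which classical result is taken as the starting point rather than of substance.
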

\begin{proof}
Let us denote by $[0,T]^n_< = \{0\leq t_1\leq, \ldots, \leq t_n\leq T\}.$
Using It$\hat{\rm o}$-Wiener expansion (Chapter 1 of \cite{dorogovtsev2019stochastic}), there exists $a_n\in L^2\left([0,T]^n_<\times\left(\mathbb{R}^d\right)^n\right)$ for $n=1,2,...$, such that
\begin{align*}
M_T=\mathbb{E}M_0+\sum_{n=1}^\infty\int_{[0,T]^n_<}\int_{(\mathbb{R}^d)^n}a_n(r_1,...,r_n,t_1,...,t_n)W(dt_1,dr_1)...W(dt_n,dr_n).
\end{align*}
Define
\begin{align*}
&z(t,r)\\&=\sum_{n=1}^\infty\int_{[0,T]^{n-1}_<}\int_{(\mathbb{R}^d)^{n-1}}a_n(r_1,...,r_{n-1},r,t_1,...,t_{n-1},t)W(dt_1,dr_1)...W(dt_{n-1},dr_{n-1})
\end{align*}
Then we can rewrite
\begin{align*}
M_T=\mathbb{E}M_0+\int_0^T\int_{\mathbb{R}^d}z(s,r)W(ds,dr).
\end{align*}
From this we get representation
\begin{align*}
M_t=\mathbb{E}^{\mathcal{F}_t}M_T=\mathbb{E}M_0+\int_0^t\int_{\mathbb{R}^d} z(s,r)W(ds,dr),
\end{align*}
where $\mathbb{E}^{\mathcal{F}_t}$ denotes a conditional expectation with respect to the $\sigma-$algebra $\mathcal{F}_t.$
\end{proof}

Using integrals with respect to the  Wiener sheet, we can construct two Brownian motions with an arbitrary correlation structure. Indeed, let the functions $f_1,$ $f_2$ be such that  $\int_{\mathbb R^d} f_i^2(r)dr = 1.$
Define two Brownian motions $B_i(t) = \int_0^t \int_{\mathbb R^d} f_i(r) W(ds,dr).$
Then the correlation between these Brownian motions is equal to 
$$
\mathbb E B_1(s) B_2(t) = t\wedge s \int_{\mathbb R^d}f_1(r) f_2(r) dr.
$$

Let $\{x(u,t), \ u\in \mathbb R, t\geq 0\}$ be a solution to the stochastic differential equation with interaction (\ref{SDE_WI}). Then the correlation  between the trajectories starting from initial positions $u$ and $v$ is given by
$$
\text{cor}(x(u,t), x(v,t)) = \int_0^t\int_{\mathbb{R}}
\mathbb E\sigma(s, x(u,s),\mu_s, r) \sigma(s, x(v,s),\mu_s, r)dsdr. 
$$
For example, if we choose the coefficient function as $$\sigma(s, u,\mu, r) = \exp\{-\|u-r\|^2\},$$
then we obtain
\begin{align*}
\text{cor}(x(u,t), x(v,t)) &= \mathbb E\int_0^t\int_{\mathbb{R}}
\mathbb \exp\{-\|x(u,s)-r\|^2\}\exp\{-\|x(v,s)-r\|^2\}dsdr\\
&=\pi^{d/2}\mathbb E\int_0^t
\mathbb \exp\{-\|x(u,s)-x(v,s)\|^2\}ds.
\end{align*}
This formula shows that the correlation between $ x(u,t)$ and $x(v,t)$  depends on the proximity of the two trajectories over time. Specifically, the exponential kernel $\exp\{-\|x(u,s)-x(v,s)\|^2\}$ decays rapidly when the paths diverge, meaning that points which stay close to each other during the evolution will exhibit stronger correlation. Such a structure naturally arises in models of spatially interacting particles, where local noise influences nearby components more strongly than distant ones.

\section{Differentiation of Function of Measure-Valued Argument}
\label{sec_Prem}
 As we mentioned in the Introduction, in order to define the adjoint process to the stochastic optimal control for the equation with interactions, we need to differentiate the coefficients of the equation with respect to a probability measure. 
There are several approaches to defining the differentiability of a real-valued function on a space of measures  (see, for example, \cite{dawson_measure_valued_processes,Villani,Mas2007} ). 
In this paper, we will use the concept of differentiability in the sense of P.-L. Lions for functions of measure (see, for example, \cite{cardaliaguet2010notes}). Let $L^p(\mathbb{R}^d,\mathcal{B},\mu; \mathbb{R}^d)$ denote the space of $\mathcal{B}$ measurable  variables $X:\mathbb{R}^d\to\mathbb{R}^d $ such that $$||X||_{L^p}=\left(\int_{\mathbb{R}^d}|x|^p\mu(dx)\right)^{\frac{1}{p}}<\infty.$$ Let $\mathcal{P}(\mathbb{R}^d)$ be the set of probability measures $\mu$ on  $(\mathbb{R}^d,\mathcal{B}(\mathbb{R}^d))$, and define
\begin{align*}
\mathcal{P}_2(\mathbb{R}^d):=\left\{\mu\in \mathcal{P}(\mathbb{R}^d): \int_{\mathbb{R}^d}|x|^2\mu(dx)< \infty\right\}.
\end{align*}
The 2-Wasserstein metric for $\mu^1, \mu^2\in\mathcal{P}_2(\mathbb{R}^d)$ is defined by
\begin{align*}
W_2(\mu^1, \mu^2):=\inf \Bigg\{\Big(&\int_{\mathbb{R}^d \times \mathbb{R}^d}|x-y|^2 \rho(d x, d y)\Big)^{\frac{1}{2}};\\&
\, \rho\in\mathcal{P}_2(\mathbb{R}^d\times\mathbb{R}^d),\,\rho(\cdot\times\mathbb{R}^d)=\mu^1(\cdot),\,\rho(\mathbb{R}^d\times\cdot)=\mu^2(\cdot)\Bigg\} .
\end{align*}

For any function $h: \mathcal{P}_2(\mathbb{R}^d)\to \mathbb{R}^d$, we define ``lifted" function $\Tilde{h}:L^2(\mathcal{B};\mathbb{R}^d)\to \mathbb{R}^d$  by $\tilde{h}(X)=h(\mu_X),$ where $ X\in L^2(\mathcal{B};\mathbb{R}^d)$ is a random variable with distribution $\mu_X.$
If for $\mu\in\mathcal{P}_2(\mathbb{R}^d)$, there exists $X\in L^2(\mathcal{B};\mathbb{R}^d)$ such that $\mu=\mu_X$ and $\tilde{h}:L^2(\mathcal{B};\mathbb{R}^d)\to \mathbb{R}^d$ is Fr\'echet differentiable at $X$, then $h: \mathcal{P}_2(\mathbb{R}^d)\to \mathbb{R}^d$ is said to be differentiable at $\mu$ and the derivative is defined as follows.
~\\

\begin{definition}
    Let $X\in L^2(\mathcal{B};\mathbb{R}^d)$. We say $\tilde{h}$ is Fr\'echet differentiable at $X$, if there exists a bounded linear operator $D\tilde{h}(X)\in L(L^2(\mathcal{B};\mathbb{R}^d),\mathbb{R}^d)$ such that for all $Y\in L^2(\mathcal{B};\mathbb{R}^d)$,
\begin{align}\label{2.1}
\tilde{h}(X+Y)-\tilde{h}(X)=D\tilde{h}(X)(Y)+o(||Y||_{L^2}),\quad as\quad ||Y||_{L^2}\to 0,
\end{align}
where $||Y||_{L^2}^2=\int_{\mathbb{R}^d}|y|^2\mu_Y(dy).$
\end{definition} 

Since $L^2(\mathcal{B};\mathbb{R}^d)$ is $Hilbert$ space, due to the Riesz representation theorem, and as shown by Lions \cite{lionscours}, there exists a Borel measurable function $g: \mathbb{R}^d\to \mathbb{R}^d$
such that
\begin{align*}
D\tilde{h}(X)(Y)=\int_{\mathbb{R}^d\times \mathbb{R}^d}g(x)\cdot y\rho(dx,dy),\quad P-a.s. \quad Y\in L^2(\mathcal{B};\mathbb{R}^d),
\end{align*}
where $\rho$ is the joint  distribution of $X$ and $Y$, and the function $g$ depends on $X$ only through its law $\mu_X$. Thus, we can write (\ref{2.1}) as
\begin{align*}
h(\mu_{X+Y})-h(\mu_X)=\int_{\mathbb{R}^d\times \mathbb{R}^d}g(x)\cdot y\rho(dx,dy)+o(||Y||_{L^2}).
\end{align*}
The function $g(\cdot)$ is called the Lions derivative of $h: \mathcal{P}_2(\mathbb{R}^d)\to \mathbb{R}$ at $\mu=\mu_X$ and it is denoted by $h_\mu(\mu,y)=g(y), \quad y\in\mathbb{R}^d$.

\begin{example}
    Consider the function
    $$a(u,\mu)=\int_{\mathbb{R}^d}K(u-v)\mu(dv), \ \mu\in\mathcal{P}_2(\mathbb{R}^d),$$ 
    where $K$ is a smooth function such that $$\sup_{u\in\mathbb{R}^d}|K'(u)|+|K''(u)|\le K_0.$$
    Then the lifted function $\tilde{a}(u,X)=\int_{\mathbb{R}^d}K(u-v)\mu(dv):=\overline{K(u-X)}$, where $X\sim\mu$. Let $Y$ be another random variable and $\|Y\|_{L^2}$ is small enough. Assume that the joint distribution of $X$ and $Y$ is $\rho(\cdot,\cdot)$. Using Taylor expansion $$K(u-x-y)=K(u-x)+K'(u-x)\cdot (-y)+\frac{1}{2}K''(u-x-\theta_xy)\cdot(-y)^2,\quad \theta_x\in[0,1],$$ we obtain
\begin{align*}
&\tilde{a}(u,X+Y)-\tilde{a}(u,X)\\&=\overline{K\left(u-(X+Y)\right)}-\overline{K(u-X)}\\&
=\int_{\mathbb{R}^d\times\mathbb{R}^d}\left(K(u-(x+y))-K(u-x)\right)\rho(dx,dy)\\&
=-\int_{\mathbb{R}^d\times\mathbb{R}^d}K'(u-x)y\rho(dx,dy)+\int_{\mathbb{R}^d\times\mathbb{R}^d}\frac{1}{2}K''(u-(x+\theta_xy))y^2\rho(dx,dy)
\\&=-\int_{\mathbb{R}^d\times\mathbb{R}^d}K'(u-x)y\rho(dx,dy)+o(\|Y\|_{L^2}),\quad \|Y\|_{L^2}\to 0.
\end{align*}
This shows that the Fr\'echet derivative is $D\tilde{a}(u,X)(Y)=-\int_{\mathbb{R}\times\mathbb{R}}K'(u-x)y\rho(dx,dy)$ and hence the Lions derivative of $a(u, \mu)$ is
$$a_\mu(u,\mu,x)=-K'(u-x).$$
\end{example}   

\section{Backward stochastic differential equation with interaction}
\label{sec_BSDE}
As we mentioned in the Introduction, to establish the stochastic maximum principle for optimal control of the equation with interaction, we will use the method of adjoint process. In this context, the adjoint processes satisfy a backward stochastic differential equation with interaction. This section presents the general BSDE with interaction and proves the existence and uniqueness of its solution. 
We consider the following generalized BSDE with interaction:
\begin{align}\label{3.1}
\left\{\begin{array}{ll}
-dy(u,t)&=f(u,t,y(u,t),z(u,t,\cdot),\mathcal{M}^{y,u}_t,\mathcal{M}^{z,u}_t)dt-\int_{\mathbb{R}^d}z(u,t,r)W(dt,dr),
\\\quad y(u,T)&=\xi(u), \quad u\sim \mu_0,
\\\qquad\mathcal{M}^{y,u}_t&=\mu_0\circ\Phi^{-1}(u,t,\cdot,y(\cdot,t)),
\\\qquad\mathcal{M}^{z,u}_t&=\mu_0\circ\int_{\mathbb{R}^d}\Psi^{-1}(u,t,\cdot,r,z(\cdot,t,r))dr,
\end{array}\right.
\end{align}
where $\mu_0\in \mathcal{P}_2(\mathbb{R}^d)$ is the given mass distribution, and $\xi:\Omega\times \mathbb{R}^d\to\mathbb{R}^d$ such that for all  $u, u_1, u_2, v\in\mathbb{R}^d, $ 
$$\mathbb{E}|\xi(u_1)-\xi(u_2)|^2\le L|u_1-u_2|^2.$$ 
The functions $\Phi(u,t,v,y)$ and $\Psi(u,t,v,r,z) $ are $\mathcal{F}_t$-adapted, defined on $\Omega\times \mathbb{R}^d\times [0,T]\times \mathbb{R}^d\times \mathbb{R}^d$ and  $\Omega\times \mathbb{R}^d\times [0,T]\times \mathbb{R}^d\times\mathbb{R}^d \times\mathbb{R}^{d\times m }$ with values in $\mathbb{R}^d$, respectively. The function $f(u,t,y,z(\cdot),\mu,\nu)$ is  $\mathcal{F}_t$-adapted on $\Omega\times \mathbb{R}^d\times[0,T]\times \mathbb{R}^d\times L^2(\mathbb{R}^d)\times \mathcal{P}_2(\mathbb{R}^d)\times \mathcal{P}_2(\mathbb{R}^d)$ with values in $\mathbb{R}^d$.

To formulate the definition of the solution to this generalized BSDE with interaction, we first introduce the following notations. Let $\mu_0$ be a given probability measure on $\mathbb R^d$.
\begin{itemize}
\item $\mathbb{H}_T^2\left(\mathbb{R}^d\right)$ denotes the space of all predictable processes $\varphi: \Omega \times\mathbb{R}^d\times[0, T] \mapsto \mathbb{R}^d$ such that $$\|\varphi\|^2:=\mathbb{E} \int_{\mathbb{R}^d}\int_0^T\left|\varphi(u,t)\right|^2 d t\mu_0(du)<+\infty.$$

\item For $\beta>0,$ 
define the exponential norm:
$$ \|\phi\|_\beta^2:=\mathbb{E} \int_{\mathbb{R}^d} \int_0^T e^{\beta t} \left|\phi(u,t)\right|^2 d t\mu_0(du) .$$

\item $ \mathbb{H}_{T,\beta}^2\left(\mathbb{R}^d\right)$ denotes the subspace  of $ \mathbb{H}_T^2\left(\mathbb{R}^d\right)$  endowed with the norm $\|\cdot\|_\beta.$

\item $ \mathbb{K}_{T,\beta}^2\left(\mathbb{R}^{d\times m}\right)$ denotes the space  of  measurable functions, $\psi: \Omega \times\mathbb R^d \times[0, T] \times \mathbb{R}^d\mapsto \mathbb{R}^{d\times m}$ such that (using the same notation for simplify) $$\|\psi\|_\beta^2=\mathbb{E}\int_{\mathbb{R}^d}\int_0^T\int_{\mathbb{R}^d}e^{\beta t}|\psi(u,t,r)|^2drdt\mu_0(du)<+\infty.$$

\end{itemize}
\begin{remark}
The choice of the exponential $\beta$-norm is inspired by \cite{el1997backward}. This norm is essential in constructing a contraction mapping in the proof of the following Lemma~\ref{unique_sol}.
\end{remark}

\begin{definition}
    A solution to the generalized BSDE with interaction (\ref{3.1}) is a pair of processes  $(y, z)\in \mathbb{H}_{T,\beta}^2\left(\mathbb{R}^d\right)\times\mathbb{K}_{T,\beta}^2\left(\mathbb{R}^{d\times m}\right)$ such that for all $t\in [0,T]$,
  \begin{align*}
     y(u,t) = &\xi(u) +\int_t^Tf(u,s,y(u,s),z(u,s,\cdot),\mathcal{M}^{y,u}_s,\mathcal{M}^{z,u}_s)ds\\&-\int_t^T\int_{\mathbb{R}^d}z(u,s,r)W(ds,dr),
  \end{align*}
    where 
    $$
   \mathcal{M}^{y,u}_t=\mu_0\circ\Phi^{-1}(u,t,\cdot,y(\cdot,t)),
\qquad\mathcal{M}^{z,u}_t=\mu_0\circ\int_{\mathbb{R}^d}\Psi^{-1}(u,t,\cdot,r,z(\cdot,t,r))dr.$$
\end{definition}

Notice that for fixed $(u,t)\in\mathbb{R}^d\times [0,T]$, the mappings $\Phi^{-1}(u,t,\cdot,y(\cdot,t))$ and
$$\int_{\mathbb{R}^d}\Psi^{-1}(u,t,\cdot,r,z(\cdot,t,r))dr$$ are from $\mathbb{R}^d$ to $\mathbb{R}^d$ and $\mathbb{R}^{d\times m}$ related to $y$ and $z$, respectively. It is clear that $\mathcal{M}^{y,u}_t$ and $\mathcal{M}^{z,u}_t$ are $(u,t)$-measurable measures. More precisely, for fixed $(u,t)\in\mathbb{R}^d\times[0,T]$, denoting by $\Delta_1=\left\{v\mid \Phi(u,t,v,y(v,t))\in\Delta\right\}$ we have $\mathcal{M}^{y,u}_t(\Delta)=\mu_0(\Delta_1),$ which is $\mathcal{F}_t$-measurable.

Let us now demonstrate how the solution of the following equation can be represented:
\begin{align}\label{yz}
y(u,t) = \xi(u) +\int_t^Tf(u,s)ds-\int_t^T\int_{\mathbb{R}^d}z(u,s,r)W(ds,dr).
\end{align}
In this case, the process $y$ is given by the conditional expectation:
\begin{align*}
y(u,t) = \mathbb{E}^{\mathcal{F}_t}\left[\xi(u) +\int_t^Tf(u,s)ds\right].
\end{align*}
Note that $\mathbb{E}^{\mathcal{F}_t}\left[\xi(u) +\int_0^Tf(u,s)ds\right]$ is a square integrable martingale and by Lemma \ref{martin}, we can get
\begin{align*}
\mathbb{E}^{\mathcal{F}_t}\left[\xi(u) +\int_0^Tf(u,s)ds\right]=y(u,0)+\int_0^t\int_{\mathbb{R}^d}z(u,s,r)W(ds,dr).
\end{align*}
Thus, we can write:
\begin{align*}
y(u,t)=y(u,0)-\int_0^tf(u,s)ds+\int_0^t\int_{\mathbb{R}^d}z(u,s,r)W(ds,dr),
\end{align*}
and it is straightforward to verify that $(y,z)$ satisfies equation (\ref{yz}).

To prove existence and uniqueness of of a solution to BSDE (\ref{3.1}), we define a mapping $I:(y,z)\mapsto (Y,Z)$ from $\mathbb{H}_{T,\beta}^2\left(\mathbb{R}^d\right)\times\mathbb{K}_{T,\beta}^2\left(\mathbb{R}^{d\times m}\right)$ onto itself by
\begin{align*}
 Y(u,t) = &\xi(u) +\int_t^Tf(u,s,y(u,s),z(u,s,\cdot),\mathcal{M}^{y,u}_s,\mathcal{M}^{z,u}_s)ds\\&-\int_t^T\int_{\mathbb{R}^d}Z(u,s,r)W(ds,dr).
\end{align*}
Our goal is to prove that $I$ is a contraction, which will then imply that BSDE (\ref{3.1}) has a unique solution. This is also the main idea in the proof of the existence and uniqueness of the solution in the next lemma.

We assume that $\mathbb{E}\int_0^T|f(u,s,y,z(\cdot),\mu,\nu)|^2ds<+\infty,$ and that there exist constants $L_1,\ L_2>0$ such that  for all $t\in[0,T],\ u,\ v_i,\ y_i\in\mathbb{R}^d$, $z_i\in L^2(\mathbb{R}^d)$, and $\mu_i, \nu_i\in\mathcal{P}_2(\mathbb{R}^d)$, $i=1,2,$ the following inequalities hold:
\begin{align}\label{3.2}
|f(u,t,y_1,z_1(\cdot),\mu_1,\nu_1)-f(u,t,y_2,z_2(\cdot),\mu_2,\nu_2)|^2\le L_1R_1^2,
\end{align}
and
\begin{align}\label{3.4}
&|\Phi(u,t,v_1,y_1)-\Phi(u,t,v_2,y_2)|^2\notag\\&+|\Psi(u,t,v_1,r,z_1(\cdot))-\Psi(u,t,v_2,r,z_2(\cdot))|^2\le L_2R_2^2,
\end{align}
where $$R_1^2=\left(|y_1-y_2|^2+\int_{\mathbb{R}^d}|z_1(r)-z_2(r)|^2dr+W_2^2(\mu_1,\mu_2)+W_2^2(\nu_1,\nu_2)\right),$$  and $$R_2^2=\left(|v_1-v_2|^2+|y_1-y_2|^2+\int_{\mathbb{R}^d}|z_1(r)-z_2(r)|^2dr\right).$$
 Here, $W_2(\cdot,\cdot)$ is the 2-Wasserstein metric

The following lemma ensures that, under the given assumptions, the mapping $I$ is a contraction, which will then imply that BSDE
(\ref{3.1}) has a unique solution. 
\begin{lemma} 
\label{unique_sol}
Assume that  the assumptions (\ref{3.2}) and  (\ref{3.4}) hold. Then the equation (\ref{3.1}) has a unique solution.
\end{lemma}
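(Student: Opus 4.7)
The plan is a Banach fixed-point argument using the map $I:(y,z)\mapsto(Y,Z)$ already introduced, set on the Hilbert space $\mathbb{H}^2_{T,\beta}(\mathbb{R}^d)\times\mathbb{K}^2_{T,\beta}(\mathbb{R}^{d\times m})$, with the exponential weight $e^{\beta t}$ in the norm serving precisely to absorb the Lipschitz constants arising from $f$, $\Phi$, $\Psi$. First I would verify that $I$ is well-defined: for each fixed $u$, apply the growth bound on $f$ together with (\ref{3.2}) to see that $\mathbb{E}\int_0^T|f(u,s,y(u,s),z(u,s,\cdot),\mathcal{M}^{y,u}_s,\mathcal{M}^{z,u}_s)|^2ds<\infty$, so that
\[
M_t(u):=\mathbb{E}^{\mathcal{F}_t}\!\left[\xi(u)+\int_0^T f(u,s,\ldots)\,ds\right]
\]
is a square-integrable martingale. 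Lemma~\ref{martin} then furnishes $Z(u,\cdot,\cdot)\in L^2$ with $M_t(u)=M_0(u)+\int_0^t\!\int_{\mathbb{R}^d}Z(u,s,r)W(ds,dr)$, and the usual rearrangement gives the $(Y,Z)$-representation claimed.

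Next I would take two pairs $(y_i,z_i)$, $i=1,2$, set $\delta y=y_1-y_2$, $\delta z=z_1-z_2$, $\delta Y=Y_1-Y_2$, $\delta Z=Z_1-Z_2$, and apply It\^o's formula to $e^{\beta t}|\delta Y(u,t)|^2$ between $t$ and $T$. Using $\delta Y(u,T)=0$, taking expectation to kill the stochastic integral, and then integrating in $u$ against $\mu_0$, I get the identity
\[
\|\delta Y\|_\beta^2+\|\delta Z\|_\beta^2
=-\beta\|\delta Y\|_\beta^2+2\,\mathbb{E}\!\int_{\mathbb{R}^d}\!\int_0^T\! e^{\beta s}\,\delta Y(u,s)\cdot\delta f(u,s)\,ds\,\mu_0(du),
\]
where $\delta f$ is the increment of $f$ between the two sets of arguments. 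The elementary inequality $2ab\le \beta a^2+\beta^{-1}b^2$ reduces the matter to estimating $\|\delta f\|_\beta^2$.

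The main technical step, and the step I expect to be the real obstacle, is the control of the Wasserstein terms inside $\delta f$ by the ambient $\beta$-norms of $\delta y$ and $\delta z$. For this I would use the canonical coupling provided by $\mu_0$ itself: the pair $\bigl(\Phi(u,s,V,y_1(V,s)),\,\Phi(u,s,V,y_2(V,s))\bigr)$ with $V\sim\mu_0$ realizes a transport plan between $\mathcal{M}^{y_1,u}_s$ and $\mathcal{M}^{y_2,u}_s$, so by (\ref{3.4})
\[
W_2^2\!\left(\mathcal{M}^{y_1,u}_s,\mathcal{M}^{y_2,u}_s\right)
\le \int_{\mathbb{R}^d}\!\bigl|\Phi(u,s,v,y_1(v,s))-\Phi(u,s,v,y_2(v,s))\bigr|^2\mu_0(dv)
\le L_2\!\int_{\mathbb{R}^d}\!|\delta y(v,s)|^2\mu_0(dv),
\]
and an analogous coupling combined with Cauchy--Schwarz in $r$ on a suitable reference set handles $W_2^2(\mathcal{M}^{z_1,u}_s,\mathcal{M}^{z_2,u}_s)$ via the $\Psi$-bound in (\ref{3.4}). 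The key point is that after integrating over $u$ against $\mu_0$, the right-hand sides no longer depend on $u$, so by (\ref{3.2}) I arrive at
\[
\|\delta f\|_\beta^2\le C\bigl(\|\delta y\|_\beta^2+\|\delta z\|_\beta^2\bigr)
\]
for a constant $C$ depending only on $L_1,L_2$ and $T$.

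Combining the It\^o identity with this bound yields
\[
\|\delta Y\|_\beta^2+\|\delta Z\|_\beta^2\le \frac{C'}{\beta}\bigl(\|\delta y\|_\beta^2+\|\delta z\|_\beta^2\bigr),
\]
and choosing $\beta$ large enough that $C'/\beta<1$ makes $I$ a strict contraction. The Banach fixed-point theorem then gives a unique $(y,z)\in\mathbb{H}^2_{T,\beta}\times\mathbb{K}^2_{T,\beta}$ with $I(y,z)=(y,z)$, which is exactly the solution of (\ref{3.1}); uniqueness in the unweighted space follows since the norms $\|\cdot\|$ and $\|\cdot\|_\beta$ are equivalent on $[0,T]$.
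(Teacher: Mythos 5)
Your proposal follows essentially the same route as the paper: a Banach fixed-point argument for the map $I$ in the $\beta$-weighted norm, with It\^o's formula applied to $e^{\beta t}|\delta Y|^2$, Young's inequality, and the canonical $\mu_0$-coupling $\bigl(\Phi(u,s,v,y_1(v,s)),\Phi(u,s,v,y_2(v,s))\bigr)$ to bound the Wasserstein terms by $\|\delta y\|_\beta^2$ and $\|\delta z\|_\beta^2$, yielding a contraction for $\beta$ large. The only slip is your displayed ``identity'': integrating It\^o's formula from $t$ to $T$ gives a pointwise-in-$t$ bound on $\mathbb{E}\,e^{\beta t}|\delta Y(u,t)|^2$ together with a bound on $\mathbb{E}\int_t^T\!\int_{\mathbb{R}^d}e^{\beta s}|\delta Z(u,s,r)|^2\,dr\,ds$, not directly the time-integrated norms $\|\delta Y\|_\beta^2+\|\delta Z\|_\beta^2$; one must still integrate the first bound over $t\in[0,T]$ (picking up a factor $T$, which is why the paper's contraction constant is $L_1(1+L_2)(T+1)/\beta$) and set $t=0$ in the second to recover your final estimate.
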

\begin{proof}
We use the method of constructing a contraction mapping, as described in Chapter 2 in \cite{el1997backward}. For any $\mathcal{F}_t$-adapted
 continuous process $y^1(u,t),$ $y^2(u,t),\ z^1(u,t,r)$, $z^2(u,t,r)$ with bounded $\beta$-norm, and measurable with respect to spatial variable, let
\begin{align*}
\left\{\begin{array}{ll}
-dY^i(u,t)=f(u,t,y^i(u,t),z^i(u,t,\cdot),\mathcal{M}^{y^i,u}_t,\mathcal{M}^{z^i,u}_t)dt\\\qquad\qquad\qquad-\int_{\mathbb{R}^d}Z^i(u,t,r)W(dt,dr),
\\\quad Y^i(u,T)=\xi(u),
\end{array}\right.
\end{align*}
for $i=1,2$. Our aim is to show that mapping $I:(y,z)\mapsto(Y,Z)$ is a contraction mapping under the $\beta$-norm.
Let us denote
\begin{align*}
\delta l_1(u,t)=l_1^1(u,t)-l_1^2(u,t),\\
\delta l_2(u,t,r)=l_2^1(u,t,r)-l_2^2(u,t,r)
\end{align*}
for $l_1=Y,y,\, l_2=Z,z$, and
\begin{align*}
\delta f(u,t)=&f(u,t,y^1(u,t),z^1(u,t,\cdot),\mathcal{M}^{y^1,u}_t,\mathcal{M}^{z^1,u}_t)\\&-f(u,t,y^2(u,t),z^2(u,t,\cdot),\mathcal{M}^{y^2,u}_t,\mathcal{M}^{z^2,u}_t).
\end{align*}
By It$\hat{\rm o}$'s formula, we have
\begin{align*}
d\left(e^{\beta t}\delta Y(u,t)^2\right)=&\beta e^{\beta t}\delta Y(u,t)^2dt+2e^{\beta t}\delta Y(u,t)d\delta Y(u,t)+e^{\beta t}\left(d\delta Y(u,t)\right)^2\notag\\
=&e^{\beta t}\left[\beta\delta Y(u,t)^2-2\delta Y(u,t)\delta f(u,t)+\int_{\mathbb{R}^d}\delta Z(u,t,r)^2dr\right]dt\\&+2e^{\beta t}\delta Y(u,t)\int_{\mathbb{R}^d}\delta Z(u,t,r)W(dt,dr).
\end{align*}
Taking the integral from $t$ to $T$ and taking the expectation, we obtain
\begin{align}\label{11}
&\mathbb{E}e^{\beta t}\delta Y(u,t)^2+\mathbb{E}\int_{\mathbb{R}^d}\int_t^Te^{\beta s}\delta Z(u,s)^2dsdr\\
=&\mathbb{E}\int_t^T e^{\beta s}\left[-\beta \delta Y(u,s)^2+2\delta Y(u,s)\delta f(u,s)\right]ds\notag\\
\le&\mathbb{E}\int_t^T e^{\beta s}\left[-\beta \delta Y(u,s)^2+\beta\delta Y(u,s)^2+\frac{1}{\beta}\delta f(u,s)^2\right]ds\notag\\
\le&\frac{L_1}{\beta}\mathbb{E}\int_t^T e^{\beta s}\Bigg[\delta y(u,s)^2+\int_{\mathbb{R}^d}\delta z(u,s,r)^2dr\notag\\&\qquad\qquad\qquad+W_2^2(\mathcal{M}_s^{y^1,u},\mathcal{M}_s^{y^2,u})+W_2^2(\mathcal{M}_s^{z^1,u},\mathcal{M}_s^{z^2,u})\Bigg]ds.\notag
\end{align}
Using the Lipschitz condition (\ref{3.4}), we estimate the Wasserstein distances as 
\begin{align*}
W_2^2(\mathcal{M}_s^{y^1,u},\mathcal{M}_s^{y^2,u})&\le \int_{\mathbb{R}}\Big|\Phi\big(u,s,v,y^1(v,s)\big)-\Phi\big(u,s,v,y^2(v,s)\big)\Big|^2\mu_0(dv)\notag\\
&\le L_2\int_{\mathbb{R}^d}\delta y(v,s)^2\mu_0(dv).
\end{align*}
So by Fubini's theorem, we have that
\begin{align}\label{14}
\mathbb{E}\int_0^T e^{\beta s} W_2^2(\mathcal{M}_s^{y^1,u},\mathcal{M}_s^{y^2,u})ds\le L_2\|\delta y\|^2_{\beta}.
\end{align}
In the same way, we also get
\begin{align}\label{15}
\mathbb{E}\int_0^T e^{\beta s} W_2^2(\mathcal{M}_s^{z^1,u},\mathcal{M}_s^{z^2,u})ds\le L_2\|\delta z\|^2_{\beta}.
\end{align}
Using equality (\ref{11}) we have
\begin{align}\label{16}
&\quad \mathbb{E}e^{\beta t}\delta Y(u,t)^2\\&\le\frac{L_1}{\beta}\mathbb{E}\int_t^T e^{\beta s}\Bigg[\delta y(u,s)^2+\int_{\mathbb{R}^d}\delta z(u,s,r)^2dr\notag\\&\qquad\qquad\qquad\qquad+W_2^2(\mathcal{M}_s^{y^1,u},\mathcal{M}_s^{y^2,u})+W_2^2(\mathcal{M}_s^{z^1,u},\mathcal{M}_s^{z^2,u})\Bigg]ds\notag\\
&\le\frac{L_1}{\beta}\mathbb{E}\int_0^T e^{\beta s}\Bigg[\delta y(u,s)^2+\int_{\mathbb{R}^d}\delta z(u,s,r)^2dr\notag\\&\qquad\qquad\qquad\qquad+W_2^2(\mathcal{M}_s^{y^1,u},\mathcal{M}_s^{y^2,u})+W_2^2(\mathcal{M}_s^{z^1,u},\mathcal{M}_s^{z^2,u})\Bigg]ds\notag,
\end{align}
and
\begin{align}\label{16b}
&\quad \mathbb{E}\int_t^Te^{\beta s}\int_{\mathbb{R}^d}\delta Z(u,s,r)^2drds\\
&\le\frac{L_1}{\beta}\mathbb{E}\int_t^T e^{\beta s}\Bigg[\delta y(u,s)^2+\int_{\mathbb{R}^d}\delta z(u,s,r)^2dr
\notag\\&\qquad\qquad\qquad\qquad+W_2^2(\mathcal{M}_s^{y^1,u},\mathcal{M}_s^{y^2,u})+W_2^2(\mathcal{M}_s^{z^1,u},\mathcal{M}_s^{z^2,u})\Bigg]ds.\notag
\end{align}
Integrating the inequality (\ref{16}) over $u$ with respect to $\mu_0$ and over $t\in[0,T]$, and using (\ref{14}), (\ref{15}), we obtain
\begin{align*}
\|\delta Y\|^2_{\beta}\le \frac{L_1(1+L_2)T}{\beta}\Big(\|\delta y\|^2_{\beta}+\|\delta z\|^2_{\beta}\Big).
\end{align*}
Let $t=0$ and integrating (\ref{16b}) over $u$ with respect to $\mu_0$, we get
\begin{align*}
\|\delta Z\|^2_{\beta}\le \frac{L_1(1+L_2)}{\beta}\Big(\|\delta y\|^2_{\beta}+\|\delta z\|^2_{\beta}\Big).
\end{align*}
Combining the two last inequalities, we have
\begin{align*}
\|\delta Y\|^2_{\beta}+\|\delta Z\|^2_{\beta}\le \frac{L_1(1+L_2)(T+1)}{\beta}\Big(\|\delta y\|^2_{\beta}+\|\delta z\|^2_{\beta}\Big).
\end{align*}

Choosing $\beta >L_1(1+L_2)(T+1)$, we see that this mapping $I:(y,z)\mapsto(Y,Z)$ is a 
contraction mapping from $\mathbb{H}_{T,\beta}^2\left(\mathbb{R}^d\right)\times\mathbb{K}_{T,\beta}^2\left(\mathbb{R}^{d\times m}\right)$ 
onto itself. Therefore, there exists a unique fixed point, which is the unique continuous solution of the BSDE (\ref{3.1}).
\end{proof}

In the next lemma, we show that if the terminal process $\xi$ is Lipschitz continuous in $L^{2k},$ then the solution $y(\cdot, t)$  to the generalized BSDE with interaction inherits this property for any $t\in [0, T].$ 
\begin{lemma}  Assume that $\xi$ satisfy   $\mathbb{E}|\xi(u)-\xi(v)|^{2k}\le M_k|u-v|^{2k}, k\ge1$ for every $u,v\in\mathbb{R}^d,$ and (\ref{3.2}), (\ref{3.4}) hold. Then there exist a constant $C_k>0,$ depending on $M_k, L_1, L_2$ and $T$, such that $$\mathbb{E}|y(u,t)-y(v,t)|^{2k}\le C_k|u-v|^{2k},\quad \forall t\in [0,T].$$
\end{lemma}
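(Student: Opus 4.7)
The plan is to apply It\^o's formula to $|y(u,t)-y(v,t)|^{2k}$, combine the Lipschitz conditions \eqref{3.2}, \eqref{3.4} (together with the $L^{2k}$-Lipschitz assumption on $\xi$) with the usual absorption trick to handle the $z$-term, and then apply a backward Gr\"onwall inequality. Set $\delta y(t):=y(u,t)-y(v,t)$, $\delta z(t,r):=z(u,t,r)-z(v,t,r)$, $\delta\xi:=\xi(u)-\xi(v)$, and write $\delta f(s)$ for the difference of the driver evaluated along the two trajectories. Then $\delta y$ solves $-d\delta y(t)=\delta f(t)\,dt-\int_{\mathbb{R}^d}\delta z(t,r)W(dt,dr)$ with terminal value $\delta\xi$.

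First I would apply It\^o's formula to the smooth function $y\mapsto|y|^{2k}$ and take expectation (a standard localization kills the martingale piece), obtaining
\begin{align*}
&\mathbb{E}|\delta y(t)|^{2k}+k(2k-1)\,\mathbb{E}\int_t^T|\delta y(s)|^{2k-2}\int_{\mathbb{R}^d}|\delta z(s,r)|^2\,dr\,ds\\
&\qquad=\mathbb{E}|\delta\xi|^{2k}+2k\,\mathbb{E}\int_t^T|\delta y(s)|^{2k-2}\,\delta y(s)^\top\delta f(s)\,ds.
\end{align*}
I would then estimate the cross term via $2k|\delta y|^{2k-1}|\delta f|\le \beta|\delta y|^{2k}+(k^2/\beta)\,|\delta y|^{2k-2}|\delta f|^2$ and, by \eqref{3.2}, bound $|\delta f|^2 \le C\bigl(|u-v|^2+|\delta y|^2+\int_{\mathbb{R}^d}|\delta z|^2\,dr+W_2^2(\mathcal{M}^{y,u},\mathcal{M}^{y,v})+W_2^2(\mathcal{M}^{z,u},\mathcal{M}^{z,v})\bigr)$.

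For the Wasserstein terms I would use the natural coupling: pushing $\mu_0$ through the pair $\bigl(\Phi(u,s,\cdot,y(\cdot,s)),\,\Phi(v,s,\cdot,y(\cdot,s))\bigr)$ produces a coupling of $\mathcal{M}_s^{y,u}$ and $\mathcal{M}_s^{y,v}$, so by \eqref{3.4}, $W_2^2(\mathcal{M}_s^{y,u},\mathcal{M}_s^{y,v})\le L_2|u-v|^2$ and analogously for the $z$-measures. Choosing $\beta$ large enough that $Ck^2/\beta\le k(2k-1)$, the $\int|\delta z|^2\,dr$ factor on the right-hand side is absorbed into the left. A further application of Young's inequality splits $|u-v|^2|\delta y|^{2k-2}\le C_k(|u-v|^{2k}+|\delta y|^{2k})$, and together with the hypothesis $\mathbb{E}|\delta\xi|^{2k}\le M_k|u-v|^{2k}$ I arrive at
\begin{align*}
\mathbb{E}|\delta y(t)|^{2k}\le A_k|u-v|^{2k}+B_k\int_t^T\mathbb{E}|\delta y(s)|^{2k}\,ds,
\end{align*}
and the backward Gr\"onwall inequality then yields $\mathbb{E}|\delta y(t)|^{2k}\le A_ke^{B_k T}|u-v|^{2k}$.

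The main obstacle is the bookkeeping around the $z$-factor: the Lipschitz bound on $\delta f$ necessarily produces a term $|\delta y|^{2k-2}\int|\delta z|^2\,dr$ on the right, and this must be absorbed into the It\^o-generated term $k(2k-1)|\delta y|^{2k-2}\int|\delta z|^2\,dr$ on the left, forcing a careful tuning of the Young constants and of $\beta$. A secondary point is that to bound $\delta f$ and the Wasserstein distances by $|u-v|$ one in fact needs Lipschitzness of $f$, $\Phi$, $\Psi$ in the first variable $u$ as well; this is not explicitly stated in \eqref{3.2}--\eqref{3.4} but is consistent with their structure and should be read as a standing hypothesis under which the lemma is proved.
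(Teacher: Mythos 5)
Your proposal is correct and follows essentially the same route as the paper: It\^o's formula applied to $|\delta y|^{2k}$, Young's inequality on the cross term, the natural coupling $\bigl(\Phi(u,s,\cdot,y(\cdot,s)),\Phi(v,s,\cdot,y(\cdot,s))\bigr)$ to bound the Wasserstein distances by $L_2|u-v|^2$, absorption of the $\int|\delta z|^2\,dr$ contribution into the It\^o quadratic-variation term, and a backward Gr\"onwall argument (the paper does $k=1$ first and then sketches general $k$ via $\Delta y_s^{2k-1}\Delta f_s\le\frac12(|\Delta y_s|^{2k}+|\Delta y_s|^{2k-2}|\Delta f_s|^2)$, exactly as you do). Your closing remark is also accurate: the paper's own estimates for $\Delta f$ and for $W_2^2(\mathcal{M}^{y,u}_t,\mathcal{M}^{y,v}_t)$ tacitly use Lipschitz continuity of $f$, $\Phi$, $\Psi$ in the first spatial argument $u$, which is not literally contained in (\ref{3.2})--(\ref{3.4}) but is clearly the intended reading.
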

\begin{proof} We first consider the case $k=1.$  Denote
$$\Delta l_t=l(u,t)-l(v,t)$$
for $l=y,z$, and
$$\Delta f_t=f(u,t,y(u,t),z(u,t,\cdot),\mathcal{M}^{y,u}_t,\mathcal{M}^{z,u}_t)-f(v,t,y(v,t),z(v,t,\cdot),\mathcal{M}^{y,v}_t,\mathcal{M}^{z,v}_t).$$
Then
\begin{align*}
d|\Delta y_t|^2=&2\Delta y_td\Delta y_t+(d\Delta y_t)^2\\
=&-2\Delta y_t\Delta f_tdt+\int_{\mathbb{R}^d}|\Delta z_t(r)|^2drdt+2\Delta y_t\int_{\mathbb{R}^d}\Delta z_t(r)W(dt,dr).
\end{align*}
Integrating from $t$ to $T$ and taking expectations, we obtain:
\begin{align*}
\mathbb{E}|\Delta y_t|^2=&\mathbb{E}|\xi(u)-\xi(v)|^2+2\mathbb{E}\int_t^T \Delta y_s\Delta f_sds-\mathbb{E}\int_t^T \int_{\mathbb{R}^d}|\Delta z_t(r)|^2drds\\
\le &M_1|u-v|^2+\mathbb{E}\int_t^T \left(c|\Delta y_s|^2+\frac{1}{c}|\Delta f_s|^2\right)ds\\
&-\mathbb{E}\int_t^T \int_{\mathbb{R}^d}|\Delta z_t(r)|^2drds.
\end{align*}
Using the condition (\ref{3.4}), we can get the upper bound for the Wasserstein distance:
\begin{align*}
&W_2^2(\mathcal{M}_t^{y,u},\mathcal{M}_t^{y,v})+W_2^2(\mathcal{M}_t^{z,u},\mathcal{M}_t^{z,v})\\
\le& \int_{\mathbb{R}^d}\Big|\Phi\big(u,t,m,y(m,t)\big)-\Phi\big(v,t,m,y(m,t)\big)\Big|^2\mu_0(dm)\\
&+\int_{\mathbb{R}^d}\int_{\mathbb{R}^d}|\Psi\big(u,t,m,r,z(m,t,r)\big)-\Psi\big(v,t,m,r,z(m,t,r)\big)\Big|^2dr\mu_0(dm)\\
\le&L_2\int_{\mathbb{R}^d}|u-v|^2\mu_0(dm)
=L_2|u-v|^2.
\end{align*}
Therefore,
\begin{align}\label{38}
|\Delta f_t|^2\le& L_1\Bigg(|u-v|^2+|\Delta y_t|^2+\int_{\mathbb{R}^d}|\Delta z_t(r)|^2dr\\&\qquad+W_2^2(\mathcal{M}_t^{y,u},\mathcal{M}_t^{y,v})+W_2^2(\mathcal{M}_t^{z,u},\mathcal{M}_t^{z,v})\Bigg)\notag\\
\le&L_1(1+L_2)|u-v|^2+L_1\Big(|\Delta y_t|^2+\int_{\mathbb{R}^d}|\Delta z_t(r)|^2dr\Big).
\end{align}
Taking $c=2L_1$, we obtain
\begin{align*}
&\mathbb{E}|\Delta y_t|^2+\frac{1}{2}\int_t^T\int_{\mathbb{R}^d}|\Delta z_t(r)|^2drds\\\le& \left(M_1+\frac{(1+L_2)T}{2}\right)|u-v|^2+\frac{1}{2}\int_t^T |\Delta y_s|^2ds.
\end{align*}
By Gronwall's inequality, we deduce the result for $k=1$. Moreover, we also get 
\begin{align}\label{39}
\mathbb{E}\int_0^T \int_{\mathbb{R}^d}|z(u,s,r)-z(v,s,r)|^2drds\le \tilde{C} |u-v|^2
\end{align}
for some $\tilde{C}>0$.
For general $k\geq1$, a similar argument yields
\begin{align*}
\mathbb{E}|\Delta y_t|^{2k}=&\mathbb{E}|\xi(u)-\xi(v)|^{2k}+2k\mathbb{E}\int_t^T \Delta y_s^{2k-1}\Delta f_sds\\&-k(2k-1)\mathbb{E}\int_t^T |\Delta y_s|^{2k-2}\int_{\mathbb{R}^d}|\Delta z_s(r)|^2drds.
\end{align*}
Notice that $\Delta y_s^{2k-1}\Delta f_s\le\frac{1}{2}(\Delta |y_s|^{2k}+|\Delta y_s|^{2k-2}|\Delta f_s|^2)$. In the same way, the conclusion is proved.
\end{proof}

\section{Stochastic Maximum Principle}
\label{sec_CS}
In this section, we consider the following control problem. The state equation is given by
\begin{align*}
\left\{\begin{array}{ll}
dX(u,t) =b\Big(t,X(u,t) ,\mu_t,\alpha_t \Big)dt+\int_{\mathbb{R}^d}\sigma\Big(t,X(u,t) ,\mu_t,\alpha_t \Big)W(dt,dr),
\\X(u,0)=u,
\\\mu_t=\mu_0\circ X(\cdot,t)^{-1},\qquad 0\le t\le T,
\end{array}\right.
\end{align*}
where $\mu_t=\mu_0\circ X(\cdot,t)^{-1}$
represents the mass distribution of all particles
at time $t.$ The cost functional is defined as
\begin{align}\label{ccost}
    J(\alpha_t)=\mathbb{E}\left[\int_0^T  f\Big(t,\mu_t,\alpha_t \Big)dt+g(\mu_T)\right],
\end{align}
where $b(t,x,\mu,\alpha)$ and $\sigma(t,x,\mu,\alpha,r) $ are measurable functions on $\mathbb{R}\times \mathbb{R}^d\times \mathcal{P}_2(\mathbb{R}^d)\times \mathbb{R}^k $ and  $\mathbb{R}\times \mathbb{R}^d\times \mathcal{P}_2(\mathbb{R}^d)\times \mathbb{R}^k\times \mathbb{R}^d,$ respectively, taking values in $\mathbb{R}^d$ and $\mathbb{R}^{d\times m}.$ Both are assumed to be differentiable with respect to $x, \mu, \alpha.$
The functions $f(t,\mu,\alpha)$ and $g(\mu) $ are  measurable functions on $\mathbb{R}\times \mathcal{P}_2(\mathbb{R}^d)\times \mathbb{R}^k$ and $\mathcal{P}_2(\mathbb{R}^d)$, respectively, with values in $\mathbb{R}$, and are differentiable with respect to $ \mu$ and $\alpha.$

We assume that
\begin{align*}
|b(t,x,\mu,\alpha)|+\left(\int_{\mathbb{R}^d}|\sigma(t,x,\mu,\alpha,r)|^2dr\right)^{\frac{1}{2}
}\le L(1+|x|+W_2(\mu,\delta_0)),
\end{align*}
and
\begin{align*}
&|b(t,x_1,\mu_1,\alpha_1)-b(t,x_2,\mu_2,\alpha_2)|+\\&\left(\int_{\mathbb{R}^d}|\sigma(t,x_1,\mu_1,\alpha_1,r)-\sigma(t,x_2,\mu_2,\alpha_2,r)|^2dr\right)^\frac{1}{2}\le LR_3,
\end{align*}
for some constant $L>0$, where $R_3=\left(|x_1-x_2|+W_2(\mu_1,\mu_2)+|\alpha_1-\alpha_2|\right)$ and $\delta_0$ is the Dirac measure at $0$. We also assume that $b_x(t,x,\mu,\alpha),$ $ b_\alpha(t,x,\mu,\alpha),$  $b_\mu(t,x,\mu,\alpha)(v),$ $\sigma_x(t,x,\mu,\alpha,r),$ $ \sigma_\alpha(t,x,\mu,\alpha,r),$ $\sigma_\mu(t,x,\mu,\alpha,r)(v)$ are continuous w.r.t. $(x,\mu,\alpha,v)$.

We denote by $\mathbb{U}$ the set of admissible controls
$\alpha=(\alpha_t)_{0\le t\le T}$, which take values in a given closed-convex set $\textbf{U}\subset \mathbb{R}^k$ and satisfying $\mathbb{E}\int_0^T |\alpha_t|^2 dt <\infty$.

To simplify the notation without loss of generality, we consider the case $d=m=k=1$. Assume $\alpha_t^{*}$ is an optimal control process, i.e.,
\begin{align*}
    J(\alpha_t^*)=\min_{\alpha_t\in \mathbb{U}}J(\alpha_t)
.\end{align*}
For all $ 0<\varepsilon<1$, let
\begin{align*}
    \alpha_t^\varepsilon =(1-\varepsilon)\alpha_t^*+\varepsilon\tilde{\alpha}_t:=
\alpha^*_t+\varepsilon \beta_t,
\end{align*}
where $\tilde{\alpha}_t$ is any other admissible control.
Let $X^\varepsilon(u,t),$ and $ X^*(u,t)$ be the state process  corresponding to $\alpha^\varepsilon_t,$ and $ \alpha^*_t$, respectively.

Define $V(u,t)$ as a solution of
\begin{align}\label{4.333}
\left\{\begin{array}{ll}
dV(u,t)=\Big[b_x^*(u,t)V(u,t)+\int_{\mathbb{R}}b_\mu^*(u,t)(v)V(v,t)\mu_0(dv)+b_\alpha^*(u,t)\beta_t\Big]dt\\
\qquad\qquad\quad+\Big[\sigma_x^*(u,t,r)V(u,t)+\\\qquad\qquad\qquad\quad\int_{\mathbb{R}}\int_{\mathbb{R}}\sigma_\mu^*(u,t,r)(v)V(v,t)\mu_0(dv)+\sigma_\alpha^*(u,t)\beta_t\Big]W(dt,dr),
\\V(u,0)=0,
\end{array}\right.
\end{align}
where
\begin{align*}
b_x^*(u,t)=b_x\left(t,X^*(u,t),\mu_t^*,\alpha^*_t\right),&\quad
b_\alpha^*(u,t)=b_\alpha\left(t,X^*(u,t),\mu_t^*,\alpha^*_t\right),\\
\sigma_x^*(u,t,r)=\sigma_x\left(t,X^*(u,t),\mu_t^*,\alpha^*_t,r\right),&\quad
\sigma_\alpha^*(u,t,r)=\sigma_\alpha\left(t,X^*(u,t),\mu_t^*,\alpha^*_t,r\right),\\
b^*_\mu(u,t)(v)=b_\mu&\Big(t,X^*(u,t),\mu_t^*,\alpha^*_t\Big) (X^*(v,t)), \\\sigma^*_\mu(u,t,r)(v)=\sigma_\mu&\Big(t,X^*(u,t),\mu_t^*,\alpha^*_t,r\Big)(X^*(v,t)).
\end{align*}

\begin{lemma}\label{lemma5.2}
Let $V(u,t)$ be  defined by the equation (\ref{4.333}). Then we have
\begin{align*}
\sup_{0\le t\le T}\lim_{\varepsilon\to 0} \mathbb{E}\int_\mathbb{R}\left[\frac{X^\varepsilon(u,t)-X^*(u,t)}{\varepsilon}-V(u,t)\right]^2\mu_0(du)=0.
\end{align*}
\end{lemma}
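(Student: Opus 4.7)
The plan is to set $\eta^\varepsilon(u,t):=\varepsilon^{-1}[X^\varepsilon(u,t)-X^*(u,t)]-V(u,t)$ and $\Delta X^\varepsilon(u,t):=X^\varepsilon(u,t)-X^*(u,t)$, derive an SDE for $\eta^\varepsilon$, apply It\^o's formula to $\eta^\varepsilon(u,t)^2$, integrate against $\mu_0(du)$, take expectations, and close by Gronwall's inequality. A preparatory ingredient is the a priori bound
\begin{align*}
\sup_{0\le t\le T}\mathbb{E}\int_\mathbb{R}|\Delta X^\varepsilon(u,t)|^2\mu_0(du)\le C\varepsilon^2,
\end{align*}
obtained from the Lipschitz assumptions on $b,\sigma$ together with the coupling bound $W_2^2(\mu_t^\varepsilon,\mu_t^*)\le\int_\mathbb{R}|\Delta X^\varepsilon(v,t)|^2\mu_0(dv)$ (the map $u\mapsto(X^\varepsilon(u,t),X^*(u,t))$ with $u\sim\mu_0$ is a coupling of $\mu_t^\varepsilon$ and $\mu_t^*$) via a standard It\^o/Gronwall argument.

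For the linearization step, for $\lambda\in[0,1]$ set $X^{\lambda}(u,t):=X^*(u,t)+\lambda\Delta X^\varepsilon(u,t)$, $\mu_t^{\lambda}:=\mu_0\circ X^{\lambda}(\cdot,t)^{-1}$, $\alpha_t^{\lambda}:=\alpha_t^*+\lambda\varepsilon\beta_t$, and write $b_x^{\lambda},b_\mu^{\lambda},b_\alpha^{\lambda}$ for the corresponding derivatives evaluated at $(t,X^{\lambda}(u,t),\mu_t^{\lambda},\alpha_t^{\lambda})$. Using the lifted formulation of the Lions derivative recalled in Section~\ref{sec_Prem}, the fundamental theorem of calculus gives
\begin{align*}
&b(t,X^\varepsilon(u,t),\mu_t^\varepsilon,\alpha_t^\varepsilon)-b(t,X^*(u,t),\mu_t^*,\alpha_t^*)\\
&=\int_0^1\Big[b_x^{\lambda}\,\Delta X^\varepsilon(u,t)+\int_\mathbb{R} b_\mu^{\lambda}(v)\,\Delta X^\varepsilon(v,t)\mu_0(dv)+\varepsilon\, b_\alpha^{\lambda}\beta_t\Big]d\lambda,
\end{align*}
together with an analogous identity for $\sigma$. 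Dividing by $\varepsilon$ and subtracting (\ref{4.333}) yields
\begin{align*}
d\eta^\varepsilon(u,t)&=\Big[b_x^*(u,t)\eta^\varepsilon(u,t)+\int_\mathbb{R} b_\mu^*(u,t)(v)\eta^\varepsilon(v,t)\mu_0(dv)+\rho^{b,\varepsilon}(u,t)\Big]dt\\
&\quad+\int_\mathbb{R}\Big[\sigma_x^*(u,t,r)\eta^\varepsilon(u,t)+\int_\mathbb{R}\sigma_\mu^*(u,t,r)(v)\eta^\varepsilon(v,t)\mu_0(dv)+\rho^{\sigma,\varepsilon}(u,t,r)\Big]W(dt,dr),
\end{align*}
where $\rho^{b,\varepsilon},\rho^{\sigma,\varepsilon}$ collect the increments $b_\cdot^{\lambda}-b_\cdot^*$ weighted by $\varepsilon^{-1}\Delta X^\varepsilon$ and by $\beta_t$.

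By Step~1, $\varepsilon^{-1}\Delta X^\varepsilon$ is bounded in $L^2(d\mathbb{P}\otimes\mu_0\otimes dt)$, while continuity of $b_x,b_\mu,b_\alpha,\sigma_x,\sigma_\mu,\sigma_\alpha$ combined with $X^{\lambda}\to X^*$, $W_2(\mu_t^{\lambda},\mu_t^*)\to 0$ and $\alpha_t^{\lambda}\to\alpha_t^*$ gives, via dominated convergence, that the residuals vanish in the appropriate $L^2$ norm as $\varepsilon\to 0$. Applying It\^o's formula to $\eta^\varepsilon(u,t)^2$, integrating in $u$ against $\mu_0$, bounding the measure-dependent terms by Cauchy--Schwarz (e.g.\ $|\int_\mathbb{R} b_\mu^*(u,t)(v)\eta^\varepsilon(v,t)\mu_0(dv)|^2\le\|b_\mu^*\|_\infty^2\int_\mathbb{R}\eta^\varepsilon(v,t)^2\mu_0(dv)$) exactly as in the proof of Lemma~\ref{unique_sol}, and invoking Gronwall's inequality then delivers the conclusion. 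The main obstacle is the rigorous justification of the measure-variable linearization in Step~2: one must verify that along the straight-line path $X^{\lambda}(\cdot,t)$ in $L^2(\mathbb{R},\mu_0)$ the lifted map $\tilde b$ is Fr\'echet differentiable with derivative given by the Lions form, so that the directional term $\int_\mathbb{R} b_\mu^{\lambda}(u,t)(v)\Delta X^\varepsilon(v,t)\mu_0(dv)$ is indeed the correct linearization. Once this is granted under the continuity/differentiability hypotheses imposed at the start of Section~\ref{sec_CS}, the rest is a routine It\^o/Gronwall estimate patterned on Lemma~\ref{unique_sol}.
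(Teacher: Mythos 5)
Your proposal follows essentially the same route as the paper's proof: a first Gronwall estimate giving $\mathbb{E}\int|\Delta X^\varepsilon(u,t)|^2\mu_0(du)=O(\varepsilon^2)$, a fundamental-theorem-of-calculus linearization along the straight-line path $X^*+\lambda\Delta X^\varepsilon$ producing $\lambda$-averaged coefficients, a residual term shown to vanish by continuity of the derivatives, and a closing It\^o/Gronwall estimate on the squared difference integrated against $\mu_0$. The only cosmetic difference is that the paper additionally extracts the pointwise-in-$u$ bound $\mathbb{E}|\Delta X^\varepsilon(u,t)|^2=O(\varepsilon^2)$ before treating the remainder, but the structure and all key ideas coincide.
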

\begin{proof} 
 Let $\Delta X(u,t)=X^\varepsilon(u,t)-X^*(u,t)$ and $$\tilde{X}(u,t)=\frac{X^\varepsilon(u,t)-X^*(u,t)}{\varepsilon}-V(u,t).$$ Then,
\begin{align*}
\Delta X(u,t)=\int_0^tb^\varepsilon(u,s)-b^*(u,s)ds+\int_{\mathbb{R}}\int_0^t\sigma^\varepsilon(u,s,r)-\sigma^*(u,s,r) W(ds,dr),
\end{align*}
where
\begin{align*}
b^\varepsilon(u,t)=b(t,X^\varepsilon(u,t),\mu^\varepsilon_t,\alpha^\varepsilon_t),\quad  \sigma^\varepsilon(u,t,r)=\sigma(t,X^\varepsilon(u,t),\mu^\varepsilon_t,\alpha^\varepsilon_t,r).
\end{align*}
By the Lipschitz condition,  we have
\begin{align}\label{DX}
\mathbb{E}|\Delta X(u,t)|^2\le& C\mathbb{E}\int_0^t|b^\varepsilon(u,s)-b^*(u,s)|^2ds\\&+C\mathbb{E}\int_\mathbb{R}\int_0^t|\sigma^\varepsilon(u,s,r)-\sigma^*(u,s,r)|^2dsdr\notag\\
\le&C\mathbb{E}\int_0^t \left(|\Delta X(u,t)|^2+\int_{\mathbb{R}}|\Delta X(v,s)|^2\mu_0(dv)+\varepsilon^2\beta_s^2\right)ds,
\end{align}
where $C$ is a constant independent of $u,t,$ and $\varepsilon$, which may vary from line to line. Integrating both sides with respect to $\mu_0(du),$ we obtain
\begin{align*}
\mathbb{E\int_\mathbb{R}}|\Delta X(u,t)|^2\mu_0(du)\le C\mathbb{E}\int_0^t\int_\mathbb{R}|\Delta X(u,s)|^2\mu_0(du)ds+\varepsilon^2 C\|\beta\|^2.
\end{align*}
By applying Gronwall's inequality, we conclude that
\begin{align}\label{DeltaX}
\mathbb{E\int_\mathbb{R}}|\Delta X(u,t)|^2\mu_0(du)=O(\varepsilon^2),\quad \varepsilon\to 0, \quad \forall t\in[0,T].
\end{align}
Combining the inequalities (\ref{DX}) and (\ref{DeltaX}) and applying Gronwall's inequality again, we have
\begin{align}\label{DDX}
\mathbb{E}|\Delta X(u,t)|^2=O(\varepsilon^2), \quad \varepsilon\to 0, \quad \forall u\in \mathbb{R},\quad t\in[0,T].
\end{align}

Now consider $\tilde{X}(u,t).$ We have 
\begin{align*}
&\tilde{X}(u,t)\\=&\int_0^t\left[b_x^*(u,s)\tilde{X}(u,s)+\int_{\mathbb{R}}b_\mu^*(u,s)(v)\tilde{X}(v,s)\mu_0(dv)\right]ds\\
&+\int_{\mathbb{R}}\int_0^t\Big[\sigma_x^*(u,s,r)\tilde{X}(u,s)+\int_{\mathbb{R}}\sigma_\mu^*(u,s,r)(v)\tilde{X}(v,s)\mu_0(dv)\Big]W(ds,dr)\\&+R^\varepsilon(u,t),
\end{align*}
where the remaining term $R^{\varepsilon}$ is given by
\begin{align*}
&R^\varepsilon(u,t)\\=&\int_0^t\left[\tilde{b}_x^\varepsilon(u,s)-b_x^*(u,s)\right]\varepsilon^{-1}\Delta X(u,s) ds\\&+\int_0^t\int_\mathbb{R}\left[\tilde{b}_\mu^\varepsilon(u,s)(v)-b_\mu^*(u,s)(v)\right]\varepsilon^{-1}\Delta X(v,s)\mu_0(dv) ds\\
&+\int_{\mathbb{R}}\int_0^t\left[\tilde{\sigma}_x^\varepsilon(u,s,r)-\sigma_x^*(u,s,r)\right]\varepsilon^{-1}\Delta X(u,s)W(ds,dr)\\
&+\int_\mathbb{R}\int_0^t\int_\mathbb{R}\left[\tilde{\sigma}_\mu^\varepsilon(u,s,r)(v)-\sigma_\mu^*(u,s,r)(v)\right]\varepsilon^{-1}\Delta X(v,s)\mu_0(dv)W(ds,dr)\\
&+\int_0^t\left[\tilde{b}_\alpha^\varepsilon(u,s)-b_\alpha^*(u,s)\right]\beta_s ds+\int_{\mathbb{R}}\int_0^t\left[\tilde{\sigma}_\alpha^\varepsilon(u,s,r)-\sigma_\alpha^*(u,s,r)\right]\beta_sW(ds,dr)
\end{align*}
with
\begin{align*}
&\tilde{b}^\varepsilon(u,t)\\&=\int_0^1b\left(t,X^*(u,t)+\lambda\Delta X(u,t),\mu_0\circ\left[X^*(\cdot,t)+\lambda\Delta X(\cdot,t)\right]^{-1},\alpha^*_t+\lambda\varepsilon\beta_t\right)d\lambda,\\
&\tilde{\sigma}^\varepsilon(u,t,r)\\&=\int_0^1\sigma\left(t,X^*(u,t)+\lambda\Delta X(u,t),\mu_0\circ\left[X^*(\cdot,t)+\lambda\Delta X(\cdot,t)\right]^{-1},\alpha^*_t+\lambda\varepsilon\beta_t,r\right)d\lambda.
\end{align*}
By the continuous assumption on $b_x,b_\alpha,b_\mu,\sigma_x,\sigma_\alpha,\sigma_\mu$ and using (\ref{DDX}), it follows that $R^\varepsilon(u,t)\to0$ as $\varepsilon\to 0$.
Therefore, 
\begin{align*}
\mathbb{E}\tilde{X}(u,t)^2\le C\mathbb{E}\int_0^t\left[\tilde{X}(u,s)^2+\left(\int_{\mathbb{R}}\tilde{X}(v,s)\mu_0(dv)\right)^2\right]ds+o(1).
\end{align*}
Integrating with respect to $\mu_0$, and applying Fubini's theorem and Gronwall's inequality, we conclude that
\begin{align*}
\mathbb{E}\int_\mathbb{R}|\tilde{X}(u,t)|^2\mu_0(du)=o(1),\quad \varepsilon\to 0,\quad\forall t\in[0,T],
\end{align*}
which completes the proof of Lemma \ref{lemma5.2}
\end{proof} 

Similar to the proof of above lemma, we have that
\begin{align}\label{3.7}
\frac{J(\alpha_t^\varepsilon)-J(\alpha_t^*)}{\varepsilon}\to
\mathbb{E}\Bigg[\int_0^T\left(\int_{\mathbb{R}}f_\mu^*(t)(v)V(v,t)\mu_0(dv)+f_\alpha^*(t)\beta_t\right)dt\notag\\+\int_{\mathbb{R}}g_\mu^*(v)V(v,T)\mu_0(dv)\Bigg]
,\end{align}
as $\varepsilon\to 0$, where
\begin{align*}
    g_\mu^*(v)=g_\mu(\mu_T^*)(X^*(v,T)),&\qquad
f_\alpha^*(t)=f_\alpha\left(t,\mu_t^*,\alpha^*_t\right),\\
f^*_\mu(t)(v)=f_\mu\Big(t,&\mu_t^*,\alpha^*_t\Big) (X^*(v,t)).
\end{align*}

Now we consider the Hamiltonian system and the stochastic maximum principle. Define the Hamiltonian function $H$ by
\begin{align}
    H(t,x,\mu,\alpha,p,q(\cdot))=b(t,x,\mu,\alpha)p+\int_{\mathbb{R}}\sigma (t,x,\mu,\alpha,r)q(r)dr+f(t,\mu,\alpha).
\end{align}
We then define the adjoint equation as
\begin{align}\label{absde}
\left\{\begin{array}{ll}
-dp(u,t)=&\Big[b_x^*(u,t)p(u,t)+\int_{\mathbb{R}}b_\mu^*(v,t)(u)p(v,t)\mu_0(dv)\\\\&\quad+\int_{\mathbb{R}}\sigma_x^*(u,t,r)q(u,t,r)dr\\
\\
&+\int_{\mathbb{R}}\int_{\mathbb{R}}\sigma_\mu^*(v,t,r)(u)q(v,t,r)\mu_0(dv)dr+f_\mu^*(t)(u)\Big]dt\\\\&-q(u,t,r)W(dt,dr),
 \\
\\\quad p(u,T)=&g_\mu^*(u),
\end{array}\right.
\end{align}
which can be written compactly as
\begin{align*}
\left\{\begin{array}{ll}
-dp(u,t)=\Big[H^*_x(u,t)+\int_{\mathbb{R}}H_\mu^*(v,t)(u)\mu_0(dv)\Big]dt -q(u,t,r)W(dt,dr),
 \\
\\p(u,T)=g_\mu^*(u),
\end{array}\right.
\end{align*}
where
\begin{align*}
    H^*(u,t)=H\Big(t,X^*(u,t) ,\mu^*_t,\alpha^*_t,p(u,t),q(u,t,\cdot)\Big).
\end{align*}
In the following lemma, we derive the expression for the Gâteaux derivative of the cost functional.
\begin{lemma}
\label{derivative}
 Let $(\alpha_t^*)_{0\le t\le T}$ be the optimal control process,  $X^*$ be the corresponding state process, and $(p,q)$ be the adjoint process satisfying (\ref{absde}). Then, the $G\hat{a}teaux$ derivative of $J$ at $\alpha^*_t$ in the direction $\left(\tilde{\alpha}_t-\alpha_t^*\right)$ is
\begin{align*}
    \frac{d}{d\varepsilon}J\left(\alpha_t^*+\varepsilon \left(\tilde{\alpha}_t-\alpha_t^*\right)\right)\Big|_{\varepsilon=0}=\mathbb{E}\int_{\mathbb{R}}\int_0^T [H^*_\alpha(u,t)\cdot \left(\tilde{\alpha}_t-\alpha_t^*\right)]dt\mu_0(du)
,\end{align*}
where $\tilde{\alpha}_t$ is any other control process. 
\end{lemma}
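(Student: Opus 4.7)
The plan is to apply It\^o's product formula to $p(u,t)V(u,t)$, integrate against $\mu_0(du)$, take expectations, and use the adjoint equation~(\ref{absde}) together with the variation equation~(\ref{4.333}) so that the measure-derivative terms cancel after a Fubini swap, leaving precisely the expression needed to match~(\ref{3.7}).

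First, I would compute $d[p(u,t)V(u,t)]$ using the dynamics of $p$ from (\ref{absde}) and of $V$ from (\ref{4.333}). Collecting the drift pieces, three kinds of terms appear:
\begin{enumerate}
\item The pointwise $b_x^*$ and $\sigma_x^*$ contributions. The term $-V(u,t)\,b_x^*(u,t)\,p(u,t)$ from $V\,dp$ exactly cancels $p(u,t)\,b_x^*(u,t)\,V(u,t)$ from $p\,dV$; similarly, $-V(u,t)\int\sigma_x^*(u,t,r)q(u,t,r)\,dr$ cancels the cross-variation term $\int q(u,t,r)\sigma_x^*(u,t,r)V(u,t)\,dr$.
\item The measure-derivative contributions. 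From $V\,dp$ we get $-V(u,t)\int b_\mu^*(v,t)(u)\,p(v,t)\,\mu_0(dv)$, while from $p\,dV$ we get $p(u,t)\int b_\mu^*(u,t)(v)\,V(v,t)\,\mu_0(dv)$. After integrating the whole expression against $\mu_0(du)$ and applying Fubini, relabeling $u\leftrightarrow v$ in the first double integral shows these two contributions cancel. The same argument handles the $\sigma_\mu^*$ pair, using the cross variation.
\item The remaining terms, namely $-V(u,t)f_\mu^*(t)(u)$, $p(u,t)b_\alpha^*(u,t)\beta_t$, and $\int q(u,t,r)\sigma_\alpha^*(u,t,r)\beta_t\,dr$.
\end{enumerate}
Using $V(u,0)=0$, $p(u,T)=g_\mu^*(u)$, and that the stochastic integrals are true martingales (so their expectations vanish), integration yields
\begin{align*}
\mathbb{E}\int_{\mathbb{R}} g_\mu^*(u)V(u,T)\mu_0(du) &+ \mathbb{E}\int_0^T\!\!\int_{\mathbb{R}} V(u,t)f_\mu^*(t)(u)\mu_0(du)\,dt \\
&= \mathbb{E}\int_0^T\!\!\int_{\mathbb{R}}\Big[p(u,t)b_\alpha^*(u,t) + \int_{\mathbb{R}}q(u,t,r)\sigma_\alpha^*(u,t,r)\,dr\Big]\beta_t\,\mu_0(du)\,dt.
\end{align*}

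Finally I would substitute this identity into the limit expression~(\ref{3.7}): the $g_\mu^*\,V(\cdot,T)$ term and the $f_\mu^*(t)(v)V(v,t)$ term on the right-hand side of~(\ref{3.7}) get replaced by the $b_\alpha^*$ and $\sigma_\alpha^*$ integrals above. Since $\mu_0$ is a probability measure, the remaining $f_\alpha^*(t)\beta_t$ term can be written as $\int_\mathbb{R} f_\alpha^*(t)\beta_t\,\mu_0(du)$, and the three surviving terms assemble into $H_\alpha^*(u,t)\beta_t$ by the definition of the Hamiltonian, yielding the claimed formula with $\beta_t=\tilde\alpha_t-\alpha_t^*$.

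The main obstacle, and the step that really uses the structure of an equation with interaction rather than a McKean--Vlasov equation, is step~(2): verifying that the $b_\mu^*$ and $\sigma_\mu^*$ pairs cancel. This requires both Fubini (to justify swapping the $\mu_0(du)$ and $\mu_0(dv)$ integrals, using the square-integrability of $p,q,V$ guaranteed by Lemma~\ref{unique_sol} and Lemma~\ref{lemma5.2}) and the careful bookkeeping that the argument $u$ in $b_\mu^*(v,t)(u)$ from the adjoint equation plays the symmetric role to $v$ in $b_\mu^*(u,t)(v)$ from the variation equation. Everything else is a standard linear BSDE duality computation.
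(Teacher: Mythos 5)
Your proposal is correct and follows essentially the same route as the paper's proof: the same It\^o product computation for $p(u,t)V(u,t)$, the same Fubini-based cancellation of the $b_\mu^*$ and $\sigma_\mu^*$ pairs after relabeling $u\leftrightarrow v$ (the paper's identities (\ref{3.15}) and (\ref{3.11})), and the same duality identity (the paper's (\ref{3.14})) substituted into (\ref{3.7}). The only cosmetic difference is that you place the $f_\mu^*V$ term on the left of the duality identity while the paper puts it on the right; the content is identical.
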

\begin{proof} 
Further, we will use the notation $\beta_t=\tilde{\alpha}_t-\alpha_t^*$ for simplicity.
By It$\hat{\rm o}$'s formula, we have
\begin{align}\label{3.10}
 &\quad d(p(u,t)V(u,t))\\&=p(u,t)dV(u,t)+V(u,t)dp(u,t)+dp(u,t)dV(u,t) \notag\\
 &=p(u,t)\Big[b_x^*(u,t)V(u,t)+\int_{\mathbb{R}}b_\mu^*(u,t)(v)V(v,t)\mu_0(dv)+b_\alpha^*(u,t)\beta_t\Big]dt\notag \\
 &\quad -V(u,t)\Big[b_x^*(u,t)p(u,t)+\int_{\mathbb{R}}b_\mu^*(v,t)(u)p(v,t)\mu_0(dv)\notag\\&\quad+\int_{\mathbb{R}}\sigma_x^*(u,t,r)q(u,t,r)dr\notag\\
&\quad+\int_{\mathbb{R}}\sigma_\mu^*(v,t)(u)q(v,t)\mu_0(dv)+f_\mu^*(t)(u)\Big]dt\notag \\
&\quad+V(u,t)\int_{\mathbb{R}}\sigma_x^*(u,t,r)q(u,t,r)drdt+\int_{\mathbb{R}}\sigma_\alpha^*(u,t,r)q(u,t,r)\beta_tdrdt\notag\\
&\quad+\int_{\mathbb{R}}q(u,t,r)\int_{\mathbb{R}}\sigma_\mu^*(u,t,r)(v)V(v,t)\mu_0(dv)drdt+\int_{\mathbb{R}}M(u,t,r)W(dt,dr) \notag \\
&=\Big[p(u,t)\int_{\mathbb{R}}b_\mu^*(u,t)(v)V(v,t)\mu_0(dv)-V(u,t)\int_{\mathbb{R}}b_\mu^*(v,t)(u)p(v,t)\mu_0(dv)\Big]dt
    \notag\\
&\quad+\int_{\mathbb{R}}\Big[q(u,t,r)\int_{\mathbb{R}}\sigma_\mu^*(u,t,r)(v)V(v,t)\mu_0(dv)\notag\\&\qquad\quad-V(u,t)\int_{\mathbb{R}}\sigma_\mu^*(v,t,r)(u)q(v,t,r)\mu_0(dv)\Big]drdt
    \notag\\
    &\quad-V(u,t)f_\mu^*(t)(u)dt+\Big[b_\alpha^*(u,t)p(u,t)+\int_{\mathbb{R}}\sigma_\alpha^*(u,t,r)q(u,t,r)dr\Big]\beta_tdt\notag\\&\quad+M(u,t,r)W(dt,dr)
,\notag
\end{align}
where $M(u,\cdot,r)$ is an $\mathcal{F}_t$-adapted process.

By Fubini's theorem, we have
\begin{align}\label{3.15}
&\quad\int_{\mathbb{R}}p(u,t)\int_{\mathbb{R}}b_\mu^*(u,t)(v)V(v,t)\mu_0(dv)\mu_0(du)\\&=\int_{\mathbb{R}}V(v,t)\int_{\mathbb{R}}b_\mu^*(u,t)(v)p(u,t)\mu_0(du)\mu_0(dv)\notag\\
&=\int_{\mathbb{R}}V(u,t)\int_{\mathbb{R}}b_\mu^*(v,t)(u)p(v,t)\mu_0(dv)\mu_0(du).\notag
\end{align}
In the same way, we get
\begin{align}\label{3.11}
&\quad\int_{\mathbb{R}}\int_{\mathbb{R}}q(u,t,r)\int_{\mathbb{R}}\sigma_\mu^*(u,t,r)(v)V(v,t)\mu_0(dv)\mu_0(du)dr
\\&=\int_{\mathbb{R}}V(u,t)\int_{\mathbb{R}}\int_{\mathbb{R}}\sigma_\mu^*(v,t,r)(u)q(v,t,r)dr\mu_0(dv)\mu_0(du).\notag
\end{align}
Taking  expectation and integrating with respect to  $t$ and  $\mu_0$ in equality (\ref{3.10}), and using  (\ref{3.15}) and (\ref{3.11}), we obtain
\begin{align}\label{3.14}
  &\mathbb{E}\int_{\mathbb{R}}g_\mu^*(u)V(u,T)\mu_0(du)\\&=\mathbb{E}\int_{\mathbb{R}}p(u,T)V(u,T)\mu_0(du)=\mathbb{E}\int_{\mathbb{R}}\int_0^T d(p(u,t)V(u,t))dt\mu_0(du)\notag\\
    &=-\mathbb{E}\int_{\mathbb{R}}\int_0^T\Big[V(u,t)f_\mu^*(t)(u)\Big]dt\mu_0(du)\notag\\
    &\quad+\mathbb{E}\int_{\mathbb{R}}\int_0^T\Big[b_\alpha^*(u,t)p(u,t)+\int_{\mathbb{R}}\sigma_\alpha^*(u,t,r)q(u,t,r)dr\Big]\beta_tdt\mu_0(du).\notag
\end{align}
Substituting (\ref{3.14}) into (\ref{3.7}),  we get
\begin{align}
    &\quad \frac{d}{d\varepsilon}J(\alpha_t^*+\varepsilon \beta_t)\Big|_{\varepsilon=0}\\&=\mathbb{E}\int_{\mathbb{R}}\int_0^T\Big[\Big(b_\alpha^*(u,t)p(u,t)+\int_{\mathbb{R}}\sigma_\alpha^*(u,t,r)q(u,t,r)dr+f_\alpha^*(t)\Big)\cdot \beta_t\Big]dt\mu_0(du)\notag\\
&=\mathbb{E}\int_{\mathbb{R}}\int_0^T[H_\alpha^*(u,t)\cdot \beta_t]dt\mu_0(du),\notag
\end{align}
which completes the proof.
\end{proof}

The following result provides a necessary condition for optimality in terms of the Hamiltonian function.
\begin{theorem} Assume that  $(\alpha_t^*)_{0\le t\le T}$ is the optimal control process, $(X_t^*)_{0\le t\le T}$ and $(p_t,q_t)_{0\le t\le T}$ are corresponding state process and adjoint process satisfying (\ref{absde}), respectively, $\mu^*_t=\mu_0\circ X^*(\cdot,t)^{-1}$ . Then
\begin{align}\label{3.25}
    \left[\int_{\mathbb{R}}H_\alpha^*(u,t)\mu_0(du)\right]\cdot(\tilde{\alpha}_t-\alpha^*_t)\ge 0 ,
    \qquad \forall \tilde{\alpha}_t \in \mathbb{U},\quad d\lambda\otimes dP\quad a.s.
\end{align}
\end{theorem}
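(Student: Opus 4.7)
The plan is to deduce the pointwise necessary condition from the Gâteaux derivative formula in Lemma~\ref{derivative} together with the optimality of $\alpha^*$. Since $\mathbf{U}$ is closed and convex, the perturbation $\alpha^\varepsilon_t = \alpha^*_t + \varepsilon(\tilde\alpha_t - \alpha^*_t)$ lies in $\mathbb{U}$ for every $\varepsilon \in (0,1)$ and any admissible $\tilde\alpha$. Optimality of $\alpha^*$ forces $J(\alpha^\varepsilon) \geq J(\alpha^*)$, so dividing by $\varepsilon$ and sending $\varepsilon \to 0^+$, Lemma~\ref{derivative} yields the first-order inequality
\begin{equation*}
\mathbb{E}\int_{\mathbb{R}}\int_0^T H_\alpha^*(u,t)\cdot(\tilde\alpha_t - \alpha^*_t)\,dt\,\mu_0(du) \;\geq\; 0.
\end{equation*}

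Next I would apply Fubini's theorem (which is justified by the integrability assumptions on the coefficients and the $L^2$-bounds obtained in the proof of Lemma~\ref{lemma5.2}) to exchange the order of integration and obtain
\begin{equation*}
\mathbb{E}\int_0^T \left[\int_{\mathbb{R}} H_\alpha^*(u,t)\,\mu_0(du)\right]\cdot(\tilde\alpha_t - \alpha^*_t)\,dt \;\geq\; 0.
\end{equation*}

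The main step—and the one that deserves care—is upgrading this integrated inequality to the pointwise statement in $(t,\omega)$. For this I would use a standard localization argument. Given any admissible $\tilde\alpha$ and any progressively measurable set $A \subset [0,T]\times \Omega$, define the new control $\alpha^\sharp_t = \tilde\alpha_t \mathbb{1}_A(t,\omega) + \alpha^*_t \mathbb{1}_{A^c}(t,\omega)$, which is admissible since its values lie in $\mathbf{U}$ and it inherits the required measurability and integrability. Applying the previous inequality with $\alpha^\sharp$ in place of $\tilde\alpha$, the integrand vanishes off $A$, so
\begin{equation*}
\mathbb{E}\int_0^T \mathbb{1}_A(t,\omega)\left[\int_{\mathbb{R}} H_\alpha^*(u,t)\,\mu_0(du)\right]\cdot(\tilde\alpha_t - \alpha^*_t)\,dt \;\geq\; 0.
\end{equation*}
Since $A$ is arbitrary, a standard exhaustion argument on the set where the integrand is strictly negative forces that set to have zero $d\lambda \otimes dP$ measure, which is exactly the claimed inequality \eqref{3.25}.

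The obstacle I expect to be most delicate is justifying the interchange of limit, expectation and integration in passing from $\varepsilon^{-1}(J(\alpha^\varepsilon) - J(\alpha^*))$ to the Gâteaux derivative expression; this is already encapsulated in Lemma~\ref{derivative} and relies on the $L^2$-convergence estimate of Lemma~\ref{lemma5.2} together with the growth and continuity hypotheses on $b_x, b_\mu, b_\alpha, \sigma_x, \sigma_\mu, \sigma_\alpha, f_\mu, f_\alpha, g_\mu$. Once that machinery is in place, the remaining argument is the classical convex-perturbation/localization scheme adapted to the interaction setting, with no new analytic difficulty beyond the use of $\mu_0$-integration in the variable $u$.
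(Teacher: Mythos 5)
Your proposal is correct and follows essentially the same route as the paper: optimality of $\alpha^*$ combined with the G\^ateaux derivative formula of Lemma~\ref{derivative} gives the integrated inequality, and a localization with indicator-modified admissible controls (the paper uses $\mathbf{1}_{\mathcal{A}}$ for measurable sets $\mathcal{A}$, you use the spliced control $\alpha^\sharp$ on a progressively measurable set $A$) upgrades it to the pointwise statement. Your write-up is in fact slightly more careful than the paper's about why the localized control is admissible and how the exhaustion argument concludes.
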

~\\

\begin{proof}
Since $(\alpha_t^*)_{0\le t\le T}$ is the optimal control process, we have  
\begin{align*}
    \frac{d}{d\varepsilon}J\Big(\alpha_t^*+\varepsilon (\tilde{\alpha}_t-\alpha_t^*)\Big)\Big|_{\varepsilon=0}\ge 0
.\end{align*}
By Lemma \ref{derivative}, we get
\begin{align*}
\mathbb{E}\int_{\mathbb{R}}\int_0^T\Big[H_\alpha^*(u,t)\cdot (\tilde{\alpha}_t-\alpha_t^*)\Big]dt\mu_0(du)\ge 0
.\end{align*}
Hence, for any   $t\in[0,T]$ and any $\mathcal{F}t$-measurable set $\mathcal{A}$
\begin{align*}
    \mathbb{E}\left[\mathbf{1}_{\mathcal{A}}\mathbb\int_{\mathbb{R}}H_\alpha^*(u,t)\cdot (\tilde{\alpha}_t-\alpha_t^*)\right]\mu_0(du)\ge 0.
\end{align*}
This implies that
\begin{align*}
   \left[\int_{\mathbb{R}}H_\alpha^*(u,t)\mu_0(du)\right]\cdot(\tilde{\alpha}_t-\alpha^*_t)\ge 0 ,
    \qquad \forall \tilde{\alpha}_t \in \mathbb{U},\quad d\lambda\otimes dP\quad a.s.
\end{align*}
\end{proof}

\begin{remark}
If the optimal control process $(\alpha_t^*)_{0\le t\le T}$ takes values in the interior of the $\textbf{U}$ , then condition (\ref{3.25}) can be replaced by 
\begin{align*}
    \int_{\mathbb{R}}H_\alpha^*(u,t)\mu_0(du)=0
.\end{align*}
~\\

Thus, we give the Hamiltonian system, which is a forward-backward SDE with interaction.
 \begin{align}\label{4.16}
    \left\{\begin{array}{ll}
dX^*(u,t)=H_p^*(u,t)dt+\int_{\mathbb{R}}\sigma^*(u,t,r)W(dt,dr),\\\\
  -dp_t=\Big[H^*_x(u,t)+\int_{\mathbb{R}}H_\mu^*(v,t)(u)\mu_0(dv)\Big]dt -q(u,t,r)W(dt,dr),\\\\
  X^*(u,0)=u,\\\\
 p(u,T)=g_\mu^*(u),\\\\
  \mu_t^*=\mu_0\circ X^*(\cdot,t)^{-1}.
\end{array}\right.
\end{align}
And the optimal control process should satisfy:
\begin{align}\label{4.17}
   \int_{\mathbb{R}}H_\alpha^*(u,t)\mu_0(du)=0,\quad\forall t\in[0,T],
\end{align}
where
\begin{align*}
     H^*(u,t)=&H\Big(t,X^*(u,t) ,\mu^*_t,\alpha^*_t,p(u,t),q(u,t,\cdot)\Big),\\
H(t,x,\mu,\alpha,p,q(\cdot))&=b(t,x,\mu,\alpha)p+\int_{\mathbb{R}}\sigma (t,x,\mu,\alpha,r)q(r)dr+f(t,\mu,\alpha).
\end{align*}
\end{remark}

\section{Linear Quadratic Case}
\label{sec_Example}

In this section, we consider a one-dimensional linear-quadratic (LQ) problem. Our goal is to control a system of particles so that their mass distribution is close to a given measure while minimizing the process cost at the same time. The system is given by
\begin{align}\label{55}
\left\{\begin{array}{ll}
dX(u,t) =\Big(AX(u,t)+B\Bar{X}_t+C\alpha_t\Big)dt\\\quad\qquad\qquad+\int_\mathbb{R}\varphi(t,r)\Big(DX(u,t)+F\Bar{X}_t+H\alpha_t\Big)W(dt,dr),
\\X(u,0)=u,
\\\mu_t=\mu_0\circ X(\cdot,t)^{-1},\qquad 0\le t\le T,
\end{array}\right.
\end{align}
where $\Bar{X}_t=\int_{\mathbb{R}}u\mu_t(du)=\int_{\mathbb{R}}X(u,t)\mu_0(du)$ and $\varphi$ is a given function such that $\int_\mathbb{R}\varphi(t,r)^2dr<+\infty$ for all $t\in[0,T]$.
The cost functional is given by
\begin{align}\label{56}
J(\alpha_t)=\frac{1}{2}\mathbb{E}\int_0^T\left(Q\int_{\mathbb{R}}x^2\mu_t(dx)+S\Bar{X}^2_t+R\alpha_t^2\right)dt+\mathbb{E}\rho^2(\mu_T,\nu),
\end{align}
where constants $Q,S\ge 0, R>0$, and the distance $\rho$ between measures is given by
\begin{align*}
\rho^2(\mu,\nu)=\int\int_{\mathbb{R}^2}(u-v)^2[\mu(du)-\nu(du)]\cdot[\nu(dv)-\mu(dv)],
\end{align*}
with $\nu$  a given measure.

\begin{lemma} Let $h(\mu)=\rho^2(\mu,\nu)$. Then the differential of $h$ at $\mu_T^*$ is given by
\begin{align}
h_\mu(\mu^*_T)(X^*(u,T))=4\left(\bar{X}^*_T-\int_{\mathbb{R}}v\nu(dv)\right),\qquad \forall u\in\mathbb{R},
\end{align}
where
\begin{align*}
\bar{X}^*_T=\int_{\mathbb{R}}x\mu^*_T(dx)=\int_{\mathbb{R}}X^*(u,T)\mu_0(du).
\end{align*}
\end{lemma}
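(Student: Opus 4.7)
The plan is to exploit the algebraic simplification of $\rho^2$ to reduce $h$ to a function of the first moment of $\mu$, and then compute the Lions derivative directly from the definition.

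First I would expand the integrand of $\rho^2(\mu,\nu)$ using the identity $[\mu-\nu]\otimes[\nu-\mu] = \mu\otimes\nu + \nu\otimes\mu - \mu\otimes\mu - \nu\otimes\nu$, obtaining
\begin{align*}
\rho^2(\mu,\nu) = 2\iint (u-v)^2\,\mu(du)\nu(dv) - \iint(u-v)^2\,\mu(du)\mu(dv) - \iint(u-v)^2\,\nu(du)\nu(dv).
\end{align*}
Setting $\bar\mu=\int u\,\mu(du)$ and $\bar\nu=\int v\,\nu(dv)$, a direct computation of each of the three double integrals (using $(u-v)^2 = u^2 - 2uv + v^2$) gives the cancellation of all the second-moment terms and leaves exactly $\rho^2(\mu,\nu) = 2(\bar\mu-\bar\nu)^2$. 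Thus $h(\mu) = 2(\bar\mu-\bar\nu)^2$ depends on $\mu$ only through its mean.

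Next, I would lift $h$ to $\tilde h: L^2(\mathcal B;\mathbb R)\to\mathbb R$ by $\tilde h(X)=h(\mu_X)=2(\mathbb E X-\bar\nu)^2$. For any $Y\in L^2(\mathcal B;\mathbb R)$,
\begin{align*}
\tilde h(X+Y)-\tilde h(X) = 4(\mathbb E X-\bar\nu)\,\mathbb E Y + 2(\mathbb E Y)^2 = \mathbb E\bigl[4(\mathbb E X-\bar\nu)\,Y\bigr] + o(\|Y\|_{L^2}),
\end{align*}
so $\tilde h$ is Fréchet differentiable at $X$ with $D\tilde h(X)(Y) = \mathbb E[4(\mathbb E X-\bar\nu)Y]$. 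By the Riesz representation described in Section \ref{sec_Prem}, the Lions derivative is the (constant in $y$) function $h_\mu(\mu)(y) = 4(\bar\mu-\bar\nu)$.

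Finally, I would evaluate at $\mu=\mu_T^*$ and $y=X^*(u,T)$. Since $\bar\mu_T^* = \int x\,\mu_T^*(dx) = \int X^*(u,T)\,\mu_0(du) = \bar X_T^*$, we obtain
\begin{align*}
h_\mu(\mu_T^*)(X^*(u,T)) = 4\bigl(\bar X_T^* - \textstyle\int_{\mathbb R}v\,\nu(dv)\bigr), \qquad \forall u\in\mathbb R,
\end{align*}
which is the claim. The only nontrivial step is the algebraic reduction of $\rho^2$ to $2(\bar\mu-\bar\nu)^2$; once this collapse is verified, the Lions differentiation is immediate because $h$ becomes a smooth function of a single linear functional of $\mu$.
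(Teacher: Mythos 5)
Your proof is correct, but it takes a different computational route from the paper's. The paper stops at the decomposition $h=-h_1+2h_2-h_3$ with $h_1(\mu)=\iint(u-v)^2\mu(du)\mu(dv)$ and $h_2(\mu)=\iint(u-v)^2\mu(du)\nu(dv)$, and then computes the G\^ateaux derivative of each piece directly by a first-order expansion along the specific perturbation $\delta X(u,T)=X^\varepsilon(u,T)-X^*(u,T)$, collecting the linear terms to read off the derivative. You instead push the algebra one step further: expanding $(u-v)^2=u^2-2uv+v^2$ in all three double integrals makes the second moments cancel and collapses the functional to $h(\mu)=2(\bar\mu-\bar\nu)^2$, after which the Lions derivative is immediate from the lift $\tilde h(X)=2(\mathbb{E}X-\bar\nu)^2$ (and your $o(\|Y\|_{L^2})$ estimate $|\mathbb{E}Y|\le\|Y\|_{L^2}$ is fine). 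Your reduction is cleaner and reveals something the paper leaves implicit — that with the kernel $\Gamma(u)=u^2$ this ``distance'' only sees the means of the two measures, which also explains at a glance why the derivative is constant in $u$. The paper's perturbative computation is less transparent here but is the more general-purpose template: it would carry over to kernels $\Gamma$ for which no such collapse to a single linear functional occurs. Both arguments yield the same answer, and your evaluation at $\mu=\mu_T^*$, using $\bar\mu_T^*=\int X^*(u,T)\mu_0(du)=\bar X_T^*$, matches the statement.
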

~\\

\begin{proof} 
By the definition of function $h,$ we can write
\begin{align*}
h(\mu)&=\int\int_{\mathbb{R}^2}(u-v)^2\left[-\mu(du)\mu(dv)+\mu(du)\nu(dv)+\mu(dv)\nu(du)-\nu(du)\nu(dv)\right]\\
&=-\int\int_{\mathbb{R}^2}(u-v)^2\mu(du)\mu(dv)+2\int\int_{\mathbb{R}^2}(u-v)^2\mu(du)\nu(dv)\\
&\quad-\int\int_{\mathbb{R}^2}(u-v)^2\nu(du)\nu(dv)\\
&= -h_1(\mu)+2h_2(\mu)-h_3(\nu).
\end{align*}
Let $\delta X(u,T)=X^\varepsilon(u,T)-X^*(u,T)$. To compute the G\^ateux derivative, we calculate the difference $h_1(\mu^\varepsilon_T)-h_1(\mu_T^*):$
\begin{align*}
&\quad h_1(\mu^\varepsilon_T)-h_1(\mu_T^*)\\&=\int\int_{\mathbb{R}^2}(u-v)^2\mu^\varepsilon_T(du)\mu^\varepsilon_T(dv)-\int\int_{\mathbb{R}^2}(u-v)^2\mu^*_T(du)\mu^*_T(dv)\\
&=\int\int_{\mathbb{R}^2}\left[\Big(X^\varepsilon(u,T)-X^\varepsilon(v,T)\Big)^2-\Big(X^*(u,T)-X^*(v,T)\Big)^2\right]\mu_0(du)\mu_0(dv)\\
&=2\int\int_{\mathbb{R}^2}\left[\Big(X^*(u,T)-X^*(v,T)\Big)\Big(\delta X(u,T)-\delta X(v,T)\Big)\right]\mu_0(du)\mu_0(dv)+o(\varepsilon)\\
&=2\int\int_{\mathbb{R}^2}\Big[X^*(u,T)\delta X(u,T)-X^*(u,T)\delta X(v,T)-X^*(v,T)\delta X(u,T)\\
&\quad\qquad\qquad+X^*(v,T)\delta X(v,T)\Big]\mu_0(du)\mu_0(dv)+o(\varepsilon)\\
&=4\int_{\mathbb{R}} X^*(u,T)\delta X(u,T)\mu_0(du)-4\int\int_{\mathbb{R}^2} X^*(v,T)\delta X(u,T)\mu_0(du)\mu_0(dv)+o(\varepsilon).
\end{align*}
For $h_2(\mu^\varepsilon_T)-h_2(\mu_T^*)$ we have:
\begin{align*}
&\quad h_2(\mu^\varepsilon_T)-h_2(\mu_T^*)\\&=\int\int_{\mathbb{R}^2}(u-v)^2\mu^\varepsilon_T(du)\nu(dv)-\int\int_{\mathbb{R}^2}(u-v)^2\mu^*_T(du)\nu(dv)\\
&=\int\int_{\mathbb{R}^2}\left[\Big(X^\varepsilon(u,T)-v\Big)^2-\Big(X^*(u,T)-v\Big)^2\right]\mu_0(du)\nu(dv)
\end{align*}
\begin{align*}
&=2\int\int_{\mathbb{R}^2}\Big[\Big(X^*(u,T)-v\Big)\delta X(u,T)\Big]\mu_0(du)\nu(dv)+o(\varepsilon)\\&=2\int_{\mathbb{R}}X^*(u,T)\delta X(u,T)\mu_0(du)-2\int\int_{\mathbb{R}^2}v\delta X(u,T)\mu_0(du)\nu(dv)+o(\varepsilon).
\end{align*}
 Collecting terms, we have
\begin{align*}
h(\mu^\varepsilon_T)-h(\mu^*_T)&=-\left(h_1(\mu^\varepsilon_T)-h_1(\mu_T^*)\right)+2\left(h_2(\mu^\varepsilon_T)-h_2(\mu_T^*)\right)\\
&=-4\int\int_{\mathbb{R}^2}v\delta X(u,T)\mu_0(du)\nu(dv)\\&\quad+4\int\int_{\mathbb{R}^2} X^*(v,T)\delta X(u,T)\mu_0(du)\mu_0(dv)+o(\varepsilon).
\end{align*}
From this, the G\^ateux derivative of $h$ is given by
\begin{align*}
h_\mu(\mu_T^*)(X^*(u,T))=4\int_{\mathbb{R}}X^*(v,T)\mu_0(dv)-4\int_{\mathbb{R}}v\nu(dv):=4\left(\bar{X}^*_T-\int_{\mathbb{R}}v\nu(dv)\right).
\end{align*}
\end{proof}
\begin{remark}  For any other probability measure $\mu_1, \mu_2$ and the corresponding random variable $X_1, X_2$, same as the proof above, we can get that 
\begin{align}\label{58}
h(\mu_2)-h(\mu_1)\ge \ \big<h_\mu(\mu_1)(X_1),X_2-X_1\big>,
\end{align}
where $\big<X,Y\big>=\int_{\mathbb{R}^2}xy\rho(dx,dy)$. Furthermore,
\begin{align}\label{59}
h(\mu_1)+h(\mu_2)\ge2h(\mu_3),
\end{align}
where $\mu_3$ is the distribution of $\frac{X_1+X_2}{2}$. The inequality (\ref{58}), (\ref{59}) will be useful in the following.
\end{remark}

From (\ref{4.16}) and (\ref{4.17}), the adjoint equation is given by
\begin{align}\label{510}
\left\{\begin{array}{ll}
-dp(u,t)=\Big[Ap(u,t)+B\Bar{p}_t+D\int_\mathbb{R}\varphi(r)q(u,t,r)dr\\\\\qquad\qquad\qquad+F\int_\mathbb{R}\varphi(t,r)\Bar{q}_t(r)dr+QX^*(u,t)+S\Bar{X}_t^*\Big]dt\\\\\qquad\qquad\qquad\quad-q(u,t,r)W(dt,dr),
 \\
\\p(u,T)=4\left(\bar{X}^*_T-\int_{\mathbb{R}}v\nu(dv)\right),
\end{array}\right.
\end{align}
where $\Bar{p}_t=\int_{\mathbb{R}}p(u,t)\mu_0(du)$ and $\Bar{q}_t(r)=\int_{\mathbb{R}}q(u,t,r)\mu_0(du)$.
The necessary condition for optimal control is
\begin{align}\label{5.8}
\alpha_t^*=-R^{-1}\left(C\Bar{p}_t+H\int_\mathbb{R}\varphi(t,r)\Bar{q}_t(r)dr\right),\quad t\in[0,T].
\end{align}

\begin{theorem}  The function $\alpha_t^*=-R^{-1}\left(C\Bar{p}_t+H\int_\mathbb{R}\varphi(t,r)\Bar{q}_t(r)dr\right), t\in[0,T],$ is the unique optimal control for LQ problem (\ref{55}), (\ref{56}), where $(p_t,q_t)$ satisfy equation (\ref{510}). 
\end{theorem}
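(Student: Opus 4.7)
The plan is to establish optimality by showing $J(\alpha_t) \geq J(\alpha^*_t)$ for any admissible $\alpha_t$, combining a convexity lower bound on the cost difference with a duality identity obtained from It\^o's formula on the adjoint process. Uniqueness will then follow from the strict convexity contributed by $R>0$. Throughout, write $X^*(u,t)$ and $X(u,t)$ for the state processes corresponding to $\alpha^*_t$ and $\alpha_t$ respectively, and put $\widetilde X(u,t)=X(u,t)-X^*(u,t)$, $\widetilde\alpha_t=\alpha_t-\alpha^*_t$, $\bar{\widetilde X}_t=\int_{\mathbb R}\widetilde X(u,t)\mu_0(du)$.

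First, I will expand the cost difference using the elementary inequality $a^2-b^2\geq 2b(a-b)$ (with equality iff $a=b$) termwise. Because $Q,S\geq 0$ and $R>0$, this yields
\begin{align*}
J(\alpha_t)-J(\alpha^*_t)&\geq \mathbb E\!\int_0^T\!\!\left[Q\!\int_{\mathbb R}X^*(u,t)\widetilde X(u,t)\mu_0(du)+S\bar X^*_t\bar{\widetilde X}_t+R\alpha^*_t\widetilde\alpha_t\right]\!dt\\
&\qquad +\tfrac{R}{2}\mathbb E\!\int_0^T\widetilde\alpha_t^2\,dt+\mathbb E\bigl[\rho^2(\mu_T,\nu)-\rho^2(\mu^*_T,\nu)\bigr].
\end{align*}
For the terminal term I will invoke the inequality (\ref{58}) with the Lions derivative $h_\mu(\mu^*_T)(X^*(u,T))=4(\bar X^*_T-\int v\,\nu(dv))$, giving
\[
\mathbb E\bigl[\rho^2(\mu_T,\nu)-\rho^2(\mu^*_T,\nu)\bigr]\geq 4\,\mathbb E\!\left[\left(\bar X^*_T-\!\int_{\mathbb R}v\,\nu(dv)\right)\!\bar{\widetilde X}_T\right].
\]

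Next I will derive a duality identity. The variation $\widetilde X(u,t)$ satisfies a linear SDE with interaction driven by the coefficients $A,B,D,F$ and the forcing $C\widetilde\alpha_t,H\widetilde\alpha_t$. Applying It\^o's formula to $p(u,t)\widetilde X(u,t)$ using the adjoint equation (\ref{510}), integrating in $t\in[0,T]$, then integrating over $u$ against $\mu_0$ and taking expectation, the pure-martingale parts vanish and Fubini's theorem makes all the symmetric cross terms from $A,B,D,F$ cancel in pairs (e.g.\ $\int\! p(u,t)B\bar{\widetilde X}_t\mu_0(du)=B\bar p_t\bar{\widetilde X}_t=\int\!\widetilde X(u,t)B\bar p_t\mu_0(du)$, and analogously for the $D,F$ contributions using the $dp\,d\widetilde X$ bracket). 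Since $\widetilde X(u,0)=0$ and $p(u,T)=4(\bar X^*_T-\int v\,\nu(dv))$, what remains is
\begin{align*}
4\,\mathbb E\!\left[\!\left(\bar X^*_T-\!\int_{\mathbb R}v\,\nu(dv)\right)\!\bar{\widetilde X}_T\right]
&=\mathbb E\!\int_0^T\!\!\left(C\bar p_t+H\!\int_{\mathbb R}\varphi(t,r)\bar q_t(r)dr\right)\!\widetilde\alpha_t\,dt\\
&\quad -\mathbb E\!\int_0^T\!\!\left[Q\!\int_{\mathbb R}X^*(u,t)\widetilde X(u,t)\mu_0(du)+S\bar X^*_t\bar{\widetilde X}_t\right]\!dt.
\end{align*}

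Finally I substitute the candidate feedback $\alpha^*_t=-R^{-1}(C\bar p_t+H\int\varphi\bar q\,dr)$, i.e.\ $C\bar p_t+H\int\varphi\bar q\,dr=-R\alpha^*_t$, into the duality identity and plug the result into the convexity bound. All the linear terms cancel exactly, leaving
\[
J(\alpha_t)-J(\alpha^*_t)\;\geq\;\tfrac{R}{2}\,\mathbb E\!\int_0^T\widetilde\alpha_t^2\,dt\;\geq\;0,
\]
which yields optimality, and strict positivity whenever $\widetilde\alpha_t\not\equiv 0$ in $L^2$, giving uniqueness. Existence and uniqueness of the adjoint pair $(p,q)$ is covered by Lemma \ref{unique_sol}, since the coefficients of (\ref{510}) satisfy the Lipschitz assumptions (\ref{3.2})--(\ref{3.4}) with $\Phi(u,t,v,y)=y$, $\Psi(u,t,v,r,z)=z$.

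The main obstacle I anticipate is the careful Fubini-type bookkeeping in the duality step: each term of the adjoint drift pairs with a specific term in $d\widetilde X$ or its quadratic covariation, and one must verify that after swapping the $u$- and $v$-integrations the non-$\alpha$ contributions cancel exactly. A secondary point is justifying that $p(\cdot,t)\widetilde X(\cdot,t)$ is integrable enough for the stochastic integrals to be true martingales; this reduces to standard $L^2$ estimates on the linear SDE for $\widetilde X$ and the BSDE for $(p,q)$, analogous to those already used in Lemma \ref{unique_sol} and Lemma \ref{lemma5.2}.
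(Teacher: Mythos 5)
Your argument for optimality is essentially the paper's: the same termwise convexity inequality $a^2-b^2\ge 2b(a-b)$, the same use of inequality (\ref{58}) with the constant Lions derivative $4\bigl(\bar X^*_T-\int v\,\nu(dv)\bigr)$, and the same It\^o/Fubini duality on $p(u,t)\widetilde X(u,t)$ that cancels the $A,B,D,F$ cross terms and leaves only the $Q$, $S$ and control terms. Where you genuinely depart from the paper is uniqueness. The paper proves it by a separate midpoint argument: it assumes two optimal controls, forms the averaged control $\tfrac{1}{2}(\alpha^{*,1}+\alpha^{*,2})$, and uses the parallelogram identity together with the midpoint convexity inequality (\ref{59}) for the terminal cost to extract $\tfrac{R}{4}\mathbb E\int_0^T|\alpha^{*,1}_t-\alpha^{*,2}_t|^2dt\le 0$. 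You instead retain the exact quadratic remainder $\tfrac{R}{2}\widetilde\alpha_t^2$ in the first-order expansion (legitimate, since $\tfrac12(a^2-b^2)=b(a-b)+\tfrac12(a-b)^2$), which upgrades the paper's conclusion $J(\alpha)-J(\alpha^*)\ge 0$ to the coercive bound $J(\alpha)-J(\alpha^*)\ge\tfrac{R}{2}\mathbb E\int_0^T\widetilde\alpha_t^2\,dt$ and yields uniqueness in one stroke, with no need for (\ref{59}) or the averaged control. This is a cleaner and slightly stronger route (it exhibits $\alpha^*$ as the unique global minimizer with a quantitative gap); the paper's version has the minor virtue of proving directly that \emph{any} two optimal controls coincide without reference to the candidate feedback, but since the candidate is already shown optimal, both deliver the theorem. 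Your closing remarks on the well-posedness of $(p,q)$ via Lemma \ref{unique_sol} and on the martingale property of the stochastic integrals match what the paper implicitly relies on.
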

\begin{proof}  Let any $\alpha_t\subset \mathbb{U}$ be any admissible control, and let $X_t$ and $X_t^*$ be the state processes corresponding to $\alpha_t$ and $\alpha_t^*$, respectively. Denote $\delta X(u,t)=X(u,t)-X^*(u,t)$ and $\delta \alpha_t=\alpha_t-\alpha^*_t.$ Applying It$\hat{\rm o}$'s formula to $p(u,t)\delta X(u,t)$ we obtain
\begin{align*}
&dp(u,t)\delta X(u,t)\\=&B\left(p(u,t)\delta \Bar{X}_t-\Bar{p}_t\delta X(u,t)\right)+F\int_\mathbb{R}\varphi(t,r)\left(q(u,t,r)\delta \Bar{X}_t-\Bar{q}_t(r)\delta X(u,t)\right)drdt\\
&-\delta X(u,t)\left(QX^*(u,t)+S\Bar{X}^*_t\right)dt+\delta\alpha_t\left(Cp(u,t)+Hq(u,t)dt\right)\\&+\int_\mathbb{R}M^u(t,r)W(dt,dr).
\end{align*}
Using (\ref{58}) and formula (\ref{5.8}), we derive
\begin{align*}
&\mathbb{E}[h(\mu_T)-h(\mu^*_T)]\\
\ge& \mathbb{E}\int_{\mathbb{R}}\int_0^Tp(u,t)\delta X(u,t)dt\mu_0(du)\\
=&-\mathbb{E}\int_0^T\left[Q\int_{\mathbb{R}}X^*(u,t)\delta X(u,t)\mu_0(du)+S\Bar{X}^*_t\delta \Bar{X}_t+R\alpha_t^*\delta\alpha_t\right]dt.
\end{align*}
Applying the inequality  $a^2-b^2\ge 2b(a-b)$, we get
\begin{align*}
&J(\alpha_t)-J(\alpha^*_t)\\&=\frac{1}{2}\mathbb{E}\left[Q\int_{\mathbf
{R}}\left(X(u,t)^2-X^*(u,t)^2\right)\mu_0(du)+S(\Bar{X}^2_t-\Bar{X}_t^{*2})+R(\alpha_t^2-{\alpha_t^*}^2)\right]dt\\&\quad+\mathbb{E}[h(\mu_T)-h(\mu^*_T)]\\
&\ge\frac{1}{2}\mathbb{E}\bigg[Q\int_{\mathbf
{R}}\left(X(u,t)^2-X^*(u,t)^2\right)\mu_0(du)-2Q\int_{\mathbb{R}}X^*(u,t)\delta X(u,t)\mu_0(du)\\
&\quad +S(\Bar{X}^2_t-\Bar{X}_t^{*2})-2S\Bar{X}^*_t\delta \Bar{X}_t+R(\alpha_t^2-{\alpha_t^*}^2)-2R\alpha_t^*\delta\alpha_t\bigg]dt
\ge0.
\end{align*}
This shows that $\alpha_t^*$ is an optimal control. 

Now let us prove that $\alpha^*_t$ is unique. Assume that both $\alpha_t^{*,1}$ and $\alpha_t^{*,2}$ are optimal controls, $X^1(u,t)$ and $X^2(u,t)$ are corresponding state processes, respectively. It is easy to get $\frac{X^1(u,t)+X^2(u,t)}{2}$ is the corresponding state process to $\frac{\alpha_t^{*,1}+\alpha_t^{*,2}}{2}$. We assume there exists a constant $ \theta\ge 0$, such that
\begin{align*}
J(\alpha_t^{*,1})=J(\alpha_t^{*,2})=\theta.
\end{align*}
Using the identity $a^2+b^2=2[(\frac{a+b}{2})^2+(\frac{a-b}{2})^2]$ and by (\ref{59}), we have that
\begin{align*}
2\theta=&J(\alpha_t^{*,1})+J(\alpha_t^{*,2})\\
=&\frac{1}{2}\mathbb{E}\int_0^T\bigg[Q\int_{\mathbb{R}}\Big(X^1(u,t)X^1(u,t)+X^2(u,t)X^2(u,t)\Big)\mu_0(du)\\
&\qquad\qquad+S\Big(\Bar{X}^1_t\Bar{X}^1_t+\Bar{X}^2_t\Bar{X}^2_t\Big)+R\Big(\alpha_t^{*,1}\alpha_t^{*,1}+\alpha_t^{*,2}\alpha_t^{*,2}\Big)\bigg]dt\\&\qquad+\mathbb{E}\Big(\rho^2(\mu_T^{*,1},\nu)+\rho^2(\mu_T^{*,2.},\nu)\Big)\\
\ge&\mathbb{E}\int_0^T\bigg[Q\int_{\mathbb{R}}\Big(\frac{X^1(u,t)+X^2(u,t)}{2}\Big)^2\mu_0(du)+S\Big(\frac{\Bar{X}^1_t+\Bar{X}^2_t}{2}\Big)^2\\
&\qquad\qquad+R\Big(\frac{\alpha_t^{*,1}+\alpha_t^{*,2}}{2}\Big)^2+R\Big(\frac{\alpha_t^{*,1}-\alpha_t^{*,2}}{2}\Big)^2\bigg]dt+2\mathbb{E}\rho^2\Big(\mu_T^{*,3},\nu\Big)\\
=&2J\Big(\frac{\alpha_t^{*,1}+\alpha_t^{*,2}}{2}\Big)+\mathbb{E}\int_0^TR\Big(\frac{\alpha_t^{*,1}-\alpha_t^{*,2}}{2}\Big)^2dt\\
\ge& 2\theta+\frac{R}{4}\mathbb{E}\int_0^T|\alpha_t^{*,1}-\alpha_t^{*,2}|^2dt,
\end{align*}
where $\mu_T^{*,3}$ is the distribution of $\frac{X_T^{*,1}+X_T^{*,2}}{2}$.
Thus, we have that
\begin{align*}
    \mathbb{E}\int_0^T|\alpha_t^{*,1}-\alpha_t^{*,2}|^2dt\le 0,
\end{align*}
which shows that $\alpha_t^{*,1}=\alpha_t^{*,2}$.
\end{proof}
\begin{remark}
Consider another terminal condition of the cost functional (\ref{56}), let $$\tilde{\rho}^2(\mu_T)=\int\int_{\mathbb{R}^2}(u-v)^2\mu_T(du)\mu_T(dv).$$ By the similar method with Lemma 5.1, we can get the derivative of $\tilde{\rho}^2$ at $\mu^*_T$ is $$\partial _\mu\tilde{\rho}^2(\mu^*_T)(X^*(u,T))=4\left(X^*(u,T)-\bar{X}_T^*\right)$$ and 
the same conclusion as (\ref{58}), (\ref{59}). So that we can also get the unique optimal control in the same way by changing the terminal value of adjoint equation (\ref{510}) to $\partial _\mu\tilde{\rho}^2(\mu^*_T)(X^*(u,T))$.
\end{remark}

\bibliography{main}

\begin{thebibliography}{10}

\bibitem{Dor2003}
A.~A Dorogovtsev.
\newblock Stochastic flows with interaction and measure-valued processes.
\newblock {\em International Journal of Mathematics and Mathematical Sciences}, 67, 11 2003.

\bibitem{dorogovtsev2023measure}
A.A. Dorogovtsev.
\newblock {\em Measure-valued processes and stochastic flows}, volume~3.
\newblock Walter de Gruyter GmbH \& Co KG, 2023.

\bibitem{rachev1985monge}
S.~T. Rachev.
\newblock {The Monge--Kantorovich mass transference problem and its stochastic applications}.
\newblock {\em Theory of Probability $\&$ Its Applications}, 29(4):647--676, 1985.

\bibitem{bogachev2012monge}
V.I. Bogachev and A.V. Kolesnikov.
\newblock {The Monge-Kantorovich problem: achievements, connections, and perspectives}.
\newblock {\em Russian Mathematical Surveys}, 67(5):785, 2012.

\bibitem{leonard2012schrodinger}
C.~L{\'e}onard.
\newblock {From the Schr{\"o}dinger problem to the Monge--Kantorovich problem}.
\newblock {\em Journal of Functional Analysis}, 262(4):1879--1920, 2012.

\bibitem{Kotelenez95}
P.~Kotelenez.
\newblock A class of quasilinear stochastic partial differential equations of {McKean-Vlasov} type with mass conservation.
\newblock {\em Probability Theory and Related Fields}, 102(2):159 -- 188, 1995.

\bibitem{Buckdahn09}
R.~Buckdahn, B.~Djehiche, J.~Li, and S.~Peng.
\newblock {Mean-field backward stochastic differential equations: A limit approach}.
\newblock {\em The Annals of Probability}, 37(4):1524 -- 1565, 2009.

\bibitem{AGRAM2022}
N.~Agram, Y.~Hu, and B.~{\O}ksendal.
\newblock Mean-field backward stochastic differential equations and applications.
\newblock {\em Systems $\&$ Control Letters}, 162:105196, 2022.

\bibitem{kolokoltsov_2010}
V.N. Kolokoltsov.
\newblock {\em Nonlinear {M}arkov Processes and {K}inetic Equations}.
\newblock Cambridge Tracts in Mathematics. Cambridge University Press, 2010.

\bibitem{Kushner72}
H.~J. Kushner.
\newblock Necessary conditions for continuous parameter stochastic optimization problems.
\newblock {\em SIAM Journal on Control}, 10(3):550--565, 1972.

\bibitem{Bismut78}
J.M. Bismut.
\newblock An introductory approach to duality in optimal stochastic control.
\newblock {\em SIAM Review}, 20(1):62--78, 1978.

\bibitem{Bensoussan_1982}
A.~Bensoussan.
\newblock Lectures on stochastic control.
\newblock In Sanjoy~K. Mitter and Antonio Moro, editors, {\em Nonlinear Filtering and Stochastic Control}, pages 1--62, Berlin, Heidelberg, 1982. Springer Berlin Heidelberg.

\bibitem{yong1999stochastic}
J.~Yong and X.Y. Zhou.
\newblock {\em Stochastic controls: {H}amiltonian systems and {HJB} equations}, volume~43.
\newblock Springer Science \& Business Media, 1999.

\bibitem{peng1990general}
S.~Peng.
\newblock A general stochastic maximum principle for optimal control problems.
\newblock {\em SIAM Journal on Control and Optimization}, 28(4):966--979, 1990.

\bibitem{zhou1998stochastic}
X.Y. Zhou.
\newblock Stochastic near-optimal controls: necessary and sufficient conditions for near-optimality.
\newblock {\em SIAM Journal on Control and Optimization}, 36(3):929--947, 1998.

\bibitem{peng1999fully}
S.~Peng and Z.~Wu.
\newblock Fully coupled forward-backward stochastic differential equations and applications to optimal control.
\newblock {\em SIAM Journal on Control and Optimization}, 37(3):825--843, 1999.

\bibitem{han2010maximum}
Y.~Han, S.~Peng, and Z.~Wu.
\newblock Maximum principle for backward doubly stochastic control systems with applications.
\newblock {\em SIAM Journal on Control and Optimization}, 48(7):4224--4241, 2010.

\bibitem{yong2013linear}
J.~Yong.
\newblock Linear-quadratic optimal control problems for mean-field stochastic differential equations.
\newblock {\em SIAM journal on Control and Optimization}, 51(4):2809--2838, 2013.

\bibitem{han2013maximum}
Y.~Han, Y.~Hu, and J.~Song.
\newblock Maximum principle for general controlled systems driven by fractional {B}rownian motions.
\newblock {\em Applied Mathematics $\&$ Optimization}, 67(2):279--322, 2013.

\bibitem{pardoux1990adapted}
E.~Pardoux and S.~Peng.
\newblock Adapted solution of a backward stochastic differential equation.
\newblock {\em Systems $\&$ Control Letters}, 14(1):55--61, 1990.

\bibitem{peng1993backward}
S.~Peng.
\newblock Backward stochastic differential equations and applications to optimal control.
\newblock {\em Applied Mathematics and Optimization}, 27(2):125--144, 1993.

\bibitem{el1997backward}
N.~El~Karoui, S.~Peng, and M.C. Quenez.
\newblock Backward stochastic differential equations in finance.
\newblock {\em Mathematical Finance}, 7(1):1--71, 1997.

\bibitem{DorJas2022backward}
J.~Dordevic and A.A. Dorogovtsev.
\newblock Backward stochastic differential equations with interaction, 2022.

\bibitem{WongZakai1974}
E.~Wong and M.~Zakai.
\newblock {Martingales and stochastic integrals for processes with a multi-dimensional parameter}.
\newblock {\em Z. Wahrscheinlichkeitstheorie verw Gebiete}, 29:109--122, 1974.

\bibitem{WONG1978339}
E.~Wong and M.~Zakai.
\newblock Differentiation formulas for stochastic integrals in the plane.
\newblock {\em Stochastic Processes and their Applications}, 6(3):339--349, 1978.

\bibitem{KurodaManaka1986}
K.~Kuroda and H.~Manaka.
\newblock {The interface of the Ising model and the Brownian sheet}.
\newblock {\em In Proceedings of the symposium on statistical mechan- ics of phase transitions—mathematical and physical aspects}, 47:979--984, 1986.

\bibitem{KurodaTanemura1988}
K.~Kuroda and H.~Tanemura.
\newblock {Interacting particle system and Brownian sheet}.
\newblock {\em Keio Science and Technology Reports}, 41(1):1--16, 1988.

\bibitem{Kotelenez92}
P.~Kotelenez and J.~A. Mann.
\newblock Random vortex models and stochastic partial differential equations.
\newblock In Boris~L. Rozovskii and Richard~B. Sowers, editors, {\em Stochastic Partial Differential Equations and Their Applications}, pages 144--152, Berlin, Heidelberg, 1992. Springer Berlin Heidelberg.

\bibitem{dorogovtsev2019stochastic}
A.A. Dorogovtsev.
\newblock {\em Stochastic Analysis and Random Maps in Hilbert Space}.
\newblock Walter de Gruyter GmbH \& Co KG, 2019.

\bibitem{dawson_measure_valued_processes}
D.~A. Dawson.
\newblock {\em Measure Valued Processes}.
\newblock École d'été de probabilités de Saint-Flour, 1991.

\bibitem{Villani}
C.~Villani.
\newblock {\em Optimal transport, old and new}.
\newblock Springer, Grundlehren der mathematischen Wissenschaften, 2008.

\bibitem{Mas2007}
M.~Renesse and K.T. Sturm.
\newblock Entropic measure and {W}asserstein diffusion.
\newblock {\em Ann. Probab}, 37, 05 2007.

\bibitem{cardaliaguet2010notes}
P.~Cardaliaguet.
\newblock Notes on mean field games.
\newblock Technical report, Technical report, 2010.

\bibitem{lionscours}
P.~L. Lions.
\newblock Cours au coll{\`e}ge de france: Th{\'e}orie des jeu {\`a} champs moyens. 2013.

\end{thebibliography}
\bibliographystyle{unsrt}

\end{document}